\documentclass[12pt]{article}
\usepackage{amsmath, amsthm, amssymb, mathrsfs, fullpage, color,tikz}
\parindent 0pt
\parskip 14pt
\renewcommand{\footnote}[1]{}
\newtheorem{thm}{Theorem}
\newtheorem{lem}[thm]{Lemma}
\newtheorem{prop}[thm]{Proposition}

\title{The Asymptotic Behavior of Compositions of the Euler and Carmichael Functions}
\author{{\sc Vishaal Kapoor}\\ \texttt{vkapoor@google.com}}
\date{{\sc \today}}

\renewcommand{\phi}{\varphi}

\newcommand{\sumst}[1]{\sum_{\substack{#1}}}
\newcommand{\ds}{\displaystyle}
\newcommand{\all}{\lambda\lambda(n)}
\newcommand{\alf}{\lambda\phi(n)}
\newcommand{\afl}{\phi\lambda(n)}
\newcommand{\qqa}{Q_{q^{\alpha}}}
\newcommand{\oqqa}{\overline{Q}_{q^{\alpha}}}
\newcommand{\pqa}{P_{q^{\alpha}}}
\newcommand{\logratio}{\log \frac{\alf}{\all}}
\newcommand{\cP}{\mathcal{P}}
\newcommand{\nn}{\nonumber\\}
\newcommand{\bZ}{\mathbb{Z}}
\newcommand{\vphi}{\varphi}
\newcommand{\lcm}{\text{lcm}}

\renewcommand{\mod}[1]{{\ifmmode\text{\rm\ (mod~$#1$)}\else\discretionary{}{}{\hbox{ }}\rm(mod~$#1$)\fi}}

\begin{document}
\maketitle
\begin{abstract}
We compare the asymptotic behavior of $\lambda(\varphi(n))$ and $\lambda(\lambda(n))$ on a set of positive integers $n$ of asymptotic density 1, where $\lambda$ is Carmichael's $\lambda$-function and $\varphi$ is Euler's totient function.
We prove that $\log {\lambda(\vphi(n))}/{\lambda(\lambda(n))}$ has normal order $\log\log n \log \log \log n$.
\end{abstract}

\section{Introduction}
{\em Euler's totient function} $\vphi(n)$ is defined to be the cardinality of the multiplicative group modulo $n$, for any positive integer $n$.  {\em Carmichael's $\lambda$-function} \cite{C} denotes the cardinality of the largest cycle in the multiplicative group modulo $n$.  In other words, $\lambda(n)$ is the smallest positive integer $m$ such that $a^m \equiv 1 \mod n$ for all reduced residues $a \mod n$.  We notice that when the multiplicative group modulo $n$ is cyclic, namely when $n=1,2,4, p^a$ or $2p^a$ where $p$ is an odd prime and $a\geq 1,$ both $\phi(n)$ and $\lambda(n)$ are equal.  

One may compute $\vphi(n)$ with the aid of the Chinese remainder theorem by using the formula
\begin{align*}
\varphi(n) = |(\bZ/p_1^{a_1} \bZ)^{\times} |  \times \cdots \times |(\bZ/p_k^{a_k} \bZ)^\times| = p_1^{a_1-1}(p_1-1)\cdots p_k^{a_k-1} (p_k-1).
\end{align*}
where $n$ has the prime decomposition $n=p_1^{a_1}\cdots p_k^{a_k}$.   For Carmichael's function we note
\begin{align}\label{LPlambda1}
\lambda(p^a) = \left\{\begin{array}{ll}
p^{a-1}(p-1) & \text{if }p\geq 3 \text{ or } a \leq 2, \text{~and}\\
2^{a-2} &\text{if } p=2 \text { and } a\geq 3,
\end{array}\right.
\end{align}
together with 
\begin{align}\label{LPlambda2}
\lambda(n) = \lcm(\lambda(p_1^{a_1}),..., \lambda(p_k^{a_k})).
\end{align}

In what follows we introduce the following notation.  Given two functions $f(n)$ and $g(n)$, we will frequently drop the outer parentheses from the expression $f(g(n))$, instead writing the composition as $fg(n)$.  Additionally for $f(n)$ denoting $\lambda(n), \phi(n)$ or $\log(n)$, we define $f_1(n) = f(n)$ and $f_{k+1}(n) = f (f_{k}(n))$ for $k\geq 1.$   We will use the expression ``for almost all $n$'' to mean for $n$ in a set of positive integers of asymptotic density $1$, and the expression ``for almost all $n\leq x$'' to be analogous, but restricting $n\leq x$.  We recall that for arithmetic functions $f(n)$ and $g(n)$, we say $f(n)$ has normal order $g(n)$ if $f(n)$ is asymptotic to $g(n)$ for almost all $n$, or equivalently if $f(n) = (1+o(1))g(n)$ for almost all $n$.  

The theorem that we prove in this article is:
\begin{thm}\label{LPthm}
The normal order of $\displaystyle \log (\lambda\vphi(n)/\lambda\lambda(n))$ is $\log_2 n \log_3 n$.  
\end{thm}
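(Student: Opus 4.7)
The plan is to decompose $\log(\lambda\vphi(n)/\lambda\lambda(n))$ prime by prime and exploit the key structural observation that $\vphi(n)$ and $\lambda(n)$ have the same set of prime divisors: formulas (\ref{LPlambda1})--(\ref{LPlambda2}) display $\lambda(n)$ as the $\lcm$ of the factors whose product equals $\vphi(n)$, so $\mathrm{rad}(\vphi(n))=\mathrm{rad}(\lambda(n))$.

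For each prime $q$ and each positive integer $m$, (\ref{LPlambda1})--(\ref{LPlambda2}) yield
\[
v_q(\lambda(m)) \;=\; \max\bigl\{v_q(m)-1,\ A_q(m)\bigr\},\qquad A_q(m):=\max_{r\mid m,\ r\ne q}v_q(r-1),
\]
with a $-2$ in place of $-1$ when $q=2$. Because $\mathrm{rad}(\vphi(n))=\mathrm{rad}(\lambda(n))$, the quantity $A_q(\cdot)$ takes the same value at $\vphi(n)$ and at $\lambda(n)$; call it $A_q$. Hence
\[
v_q(\lambda\vphi(n))-v_q(\lambda\lambda(n)) \;=\; \max(v_q(\vphi(n))-1,A_q)-\max(v_q(\lambda(n))-1,A_q),
\]
and the elementary inequality $\max(a,c)-\max(b,c)\le a-b$ (for $a\ge b$) gives $v_q(\lambda\vphi(n))-v_q(\lambda\lambda(n))\le v_q(\vphi(n))-v_q(\lambda(n))$. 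Weighting by $\log q$ and summing over $q$ produces $\log(\lambda\vphi(n)/\lambda\lambda(n))\le\log(\vphi(n)/\lambda(n))$, so the theorem of Erd\H os, Pomerance, and Schmutz (the normal order of $\log(\vphi/\lambda)$ is $\log_2 n\log_3 n$) supplies the upper bound immediately.

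For the matching lower bound I would establish, for almost all $n$, the following Tur\'an--Kubilius type estimates: (i)~$\omega(n)\sim\log_2 n$; (ii)~$v_q(\vphi(n))=\sum_{p\mid n}v_q(p-1)+O(1)\sim\frac{q}{(q-1)^2}\log_2 n$, uniformly for $q$ in a range growing with $n$; (iii)~$v_q(\lambda(n))=\max_{p\mid n}v_q(p-1)\sim\log_q\log_2 n$; and, crucially, (iv)~$\omega(\vphi(n))\sim\tfrac12(\log_2 n)^2$ together with a sieve for the primes $r\mid\vphi(n)$ in the arithmetic progressions $r\equiv1\pmod{q^{k}}$, yielding $A_q\sim 2\log_q\log_2 n$. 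For $q\le Y:=\log_2 n/(\log_3 n)^2$ these estimates force $v_q(\vphi(n))-1\gg A_q\ge v_q(\lambda(n))-1$, so
\[
v_q(\lambda\vphi(n))-v_q(\lambda\lambda(n)) \;=\; v_q(\vphi(n))-1-A_q \;\sim\; \tfrac{q}{(q-1)^2}\log_2 n,
\]
and summing with weights $\log q$ and applying Mertens' theorem gives
\[
\log\bigl(\lambda\vphi(n)/\lambda\lambda(n)\bigr) \;\ge\; (1+o(1))\log_2 n\sum_{q\le Y}\frac{\log q}{q-1} \;\sim\; \log_2 n\log Y \;\sim\; \log_2 n\log_3 n,
\]
matching the upper bound.

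The principal obstacle, I expect, is the uniform control of $A_q$ for $q\le Y$: one must show that $\vphi(n)$ has roughly the expected number of prime divisors $r$ with $v_q(r-1)=k$ for each $k\lesssim 2\log_q\log_2 n$, even though the ``sample'' of primes $r\mid\vphi(n)$ is determined by $n$ rather than drawn uniformly. This requires a two-layer Tur\'an--Kubilius argument (first for $p\mid n$, then for $r\mid p-1$) paired with Siegel--Walfisz or Bombieri--Vinogradov type control of primes in progressions modulo $q^k$. Secondary technicalities are the exceptional clause at $q=2$ and verifying that the contribution of $q>Y$ to both $\log\lambda\vphi(n)$ and $\log\lambda\lambda(n)$ is $o(\log_2 n\log_3 n)$, so as not to spoil either direction of the bound.
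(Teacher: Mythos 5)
Your upper bound argument is a genuinely different and cleaner route than the paper's.  The key algebraic observation, that $v_q(\lambda(m))=\max(v_q(m)-1,A_q(m))$ with $A_q(m)=\max_{r\mid m}v_q(r-1)$ depending only on $\mathrm{rad}(m)$, together with $\mathrm{rad}(\vphi(n))=\mathrm{rad}(\lambda(n))$, is correct (the $q=2$ wrinkle is benign: since $\lambda(n)\mid\vphi(n)$, the ``$-2$'' case can only occur on the $\vphi$ side, and the inequality survives).  Combined with $\max(a,c)-\max(b,c)\le a-b$ for $a\ge b$, this gives the pointwise bound $\log(\lambda\vphi(n)/\lambda\lambda(n))\le\log(\vphi(n)/\lambda(n))$, and Erd\H{o}s (1955) or Erd\H{o}s--Pomerance--Schmutz then yields the upper bound $\le(1+o(1))\log_2 n\log_3 n$ for almost all $n$.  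The paper instead spends Propositions \ref{LPprop1}--\ref{LPprop4} bounding the contribution of large and small primes directly; your reduction bypasses all of that, and since $\lambda\lambda(n)\mid\lambda\vphi(n)$ makes each summand $(v_q(\lambda\vphi(n))-v_q(\lambda\lambda(n)))\log q$ nonnegative, you also get to restrict to $q\le Y$ in the lower bound for free, so the ``$q>Y$ technicality'' you worry about in the last paragraph is actually not needed for either direction.

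The lower bound, however, has a real gap.  Your plan rests on the pointwise estimates (ii)--(iv) holding \emph{simultaneously for all primes $q\le Y$} for almost all $n$.  That does not follow from Tur\'an--Kubilius plus a union bound.  For the additive function $\sum_{p\mid n}v_q(p-1)$, the mean over $n\le x$ is $\mu_q\asymp y/q$ and the Tur\'an--Kubilius variance is also $\asymp y/q$; Chebyshev gives an exceptional set of relative size $\asymp q/(\varepsilon^2 y)$ for each $q$, and summing this over primes $q\le Y$ with $Y\asymp y/(\log y)^2$ gives $\asymp Y^2/(y\log Y)\asymp y/(\log y)^5\to\infty$, so the union bound diverges.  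Estimate (iii) has the same problem, and (iv) is worse still: you are asking for an asymptotic for $A_q=\max_{r\mid\vphi(n)}v_q(r-1)$ uniformly in $q$, which is a ``max''-type statistic rather than a ``sum''-type one and needs more than second-moment control.  The paper sidesteps all of this by never asking for per-$q$ asymptotics: it applies Tur\'an--Kubilius once to the aggregated additive function $g_0(n)=\sum_{q\le Y}\sum_{p\mid n}v_q(p-1)\log q$ (Proposition \ref{LPprop6}), and it bounds the entire sum $\sum_{q\le Y}v_q(\lambda\lambda(n))\log q$ on average and then invokes Markov (Proposition \ref{LPprop4} via Lemma \ref{LPlem2}).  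You can rescue your lower bound the same way: replace (ii) by the Tur\'an--Kubilius estimate for $\sum_{q\le Y}(v_q(\vphi(n))-1)\log q$, and replace (iii)--(iv) by a first-moment upper bound for $\sum_{q\le Y}v_q(\lambda\lambda(n))\log q$, noting $v_q(\lambda\lambda(n))=\max(v_q(\lambda(n))-1,A_q)\le v_q(\lambda(n))+A_q$; you never need the asymptotic $A_q\sim2\log_q\log_2 n$, only an average upper bound, which is both easier and the only thing that survives the union-bound issue.  In particular the result $\omega(\vphi(n))\sim\tfrac12(\log_2 n)^2$ and the ``two-layer Tur\'an--Kubilius with Bombieri--Vinogradov'' you flag as the principal obstacle can be dropped entirely.
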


More precisely, we show that for almost all $n \leq x$,
\begin{align}\label{LPeq100}
		\displaystyle \log \frac{\lambda\vphi(n)}{\lambda\lambda(n)} = \log_2 n \log_3 n + O(\psi(x)\log_2 x),
\end{align} where $\psi(x)$ is a function tending to infinity slower than $\log_3 x$.  We also show that the exceptional set of positive integers $n$ for equation \eqref{LPeq100} is of asymptotic density $O(x/\psi(x)).$ This work is part of the author's PhD thesis (see \cite{vkapoor}).

There has been extensive study on the asymptotic behavior of $\phi(n)$ and $\lambda(n)$ and their compositions.  In 1928, Schoenberg \cite{schoenberg} established that the quotient $n/\phi(n)$ has a continuous distribution function.  In other words: 
\begin{prop}
The limit
\begin{align*}
\Phi(t) = \lim_{N\rightarrow \infty} | \{ n \leq N : n/\vphi(n) \geq t\}|/N
\end{align*}
exists and is continuous for any real $t$.
\end{prop}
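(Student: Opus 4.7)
The plan is to recast the claim as a statement about the additive function $F(n) := \log(n/\vphi(n)) = -\sum_{p\mid n} \log(1 - 1/p)$. Then $\{n \leq N : n/\vphi(n) \geq t\} = \{n \leq N : F(n) \geq \log t\}$ (for $t \geq 1$; the set is all of $\{1,\dots,N\}$ for $t \leq 1$), so the proposition is equivalent to showing that the strongly additive function $F$ has a continuous limiting distribution on $[0,\infty)$. Note $F(p) = -\log(1 - 1/p) = 1/p + O(1/p^2)$, so $F$ is positive and of controlled size on primes.

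For existence of $\Phi$, I would truncate: for a parameter $y$, set $F_y(n) := -\sum_{p \mid n, p \leq y} \log(1 - 1/p)$. Since $F_y(n)$ depends only on the residue of $n$ modulo $\prod_{p \leq y} p$, its distribution exists by the Chinese remainder theorem and can be written explicitly by inclusion–exclusion: each pattern of divisibility by a subset $S \subseteq \{p \leq y\}$ has density $\prod_{p \in S} (1/p) \prod_{p \leq y, p \notin S}(1 - 1/p)$. To pass from $F_y$ to $F$, I would apply a Turán--Kubilius-type second-moment bound:
\[
\frac{1}{x} \sum_{n \leq x} \bigl(F(n) - F_y(n)\bigr)^2 \ll \sum_{p > y} \frac{(\log(1-1/p))^2}{p} \ll \sum_{p > y} \frac{1}{p^3},
\]
which tends to $0$ as $y \to \infty$, uniformly in $x$. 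Combined with Chebyshev's inequality, this forces the sequence of distribution functions $\Phi_y$ to be Cauchy (in the Lévy metric, say), so a limit $\Phi$ exists.

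For continuity, the cleanest route is the Erdős--Wintner criterion: the limiting distribution of a strongly additive function $f$ is continuous if and only if $\sum_{p: f(p) \neq 0} 1/p$ diverges. Since $F(p) > 0$ for every prime $p$, and $\sum_p 1/p = \infty$, continuity is immediate. If one prefers to avoid quoting Erdős--Wintner, one can argue directly: a jump at $t_0$ would require a positive density of $n$ satisfying $F(n) = t_0$ exactly; but conditioning on the indicator variables $\mathbf{1}_{p \mid n}$ for $p$ in a large window $[y, y']$ shows these are asymptotically independent (by CRT) with $F(p) > 0$ distinct, and Kolmogorov's zero-one argument on the tail sigma-algebra forbids concentration on a single level.

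The main obstacle is making the second step uniform enough in $x$ and $y$ to deduce tightness and convergence of the distribution functions, rather than merely of moments; this requires care with the error terms in the inclusion–exclusion sieve for $F_y$ and a clean application of Helly's selection theorem (or an explicit portmanteau argument) to upgrade pointwise control at continuity points to genuine weak convergence.
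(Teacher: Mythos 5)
The paper does not prove this proposition: it is stated purely as background and attributed to Schoenberg (1928), with a citation to his original paper. There is therefore no in-paper proof to compare your attempt against, so I will assess your sketch on its own.

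Your route is the standard one and is essentially correct. Passing to the strongly additive function $F(n)=\log(n/\varphi(n))=-\sum_{p\mid n}\log(1-1/p)$ and noting $F(p)\asymp 1/p$ puts the statement squarely in the regime of the Erd\H{o}s--Wintner theorem: the three series $\sum_{|F(p)|>1}1/p$, $\sum_{|F(p)|\le 1}F(p)/p$, and $\sum_{|F(p)|\le 1}F(p)^2/p$ all converge (the first is eventually empty, the others compare to $\sum 1/p^2$ and $\sum 1/p^3$), giving existence of the limiting distribution; and since $F(p)>0$ for every prime while $\sum_p 1/p$ diverges, the Erd\H{o}s--Wintner continuity criterion gives continuity immediately. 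Two small cautions. First, your displayed second-moment bound quietly conflates the variance that Tur\'an--Kubilius controls with the second moment about zero; the latter also carries $\bigl(\sum_{p>y}F(p)/p\bigr)^2$, which is of order $1/(y\log y)^2$ rather than the $\sum_{p>y}1/p^3$ you wrote. Both quantities vanish as $y\to\infty$, so the conclusion survives, but the chain of $\ll$ signs as written is not literally Tur\'an--Kubilius. Second, your ``direct'' continuity argument via CRT-independence and Kolmogorov's zero-one law is looser than it looks: the indicator variables $\mathbf{1}_{p\mid n}$ are only asymptotically independent, and the zero-one law gives degeneracy of tail events rather than absence of atoms. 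The standard way to avoid quoting Erd\H{o}s--Wintner verbatim is to write the limiting law as an infinite convolution of two-point laws and invoke the L\'evy/Jessen--Wintner purity-of-types argument via characteristic functions. Since you invoke Erd\H{o}s--Wintner for existence anyway, the cleanest presentation is to use it for both existence and continuity at once.
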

Recently Weingartner \cite{MR2317939} studied the asymptotic behavior of $\Phi(t)$ showing that as $t$ tends to infinity, $\log \Phi(t) = -\exp(t e^{-\gamma})(1+O(t^{-2}))$, where $\gamma = 0.5722...$ is Euler's constant.
 
 We mention that higher iterates of $\varphi(n)$ have been studied by Erd\H{o}s, Granville, Pomerance and Spiro in  \cite{MR1084181}.  They established:
\begin{prop}
The normal order of the $\vphi_k(n)/\vphi_{k+1}(n)$ is $k e^{\gamma} \log_3 n$, for $k\geq 1.$
\end{prop}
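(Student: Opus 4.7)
The plan is to start from the multiplicative identity
\[
\frac{\varphi_k(n)}{\varphi_{k+1}(n)} = \prod_{q\,\mid\,\varphi_k(n)} \frac{q}{q-1},
\]
the product running over the distinct prime divisors $q$ of $\varphi_k(n)$. By Mertens' theorem,
$\prod_{q\le Y}(q/(q-1)) \sim e^{\gamma}\log Y$, so to obtain the asymptotic $k\,e^{\gamma}\log_3 n$ one is led to the target
\[
Y_k := (\log_2 n)^{k},\qquad e^{\gamma}\log Y_k = k\,e^{\gamma}\log_3 n\,(1+o(1)).
\]
The task therefore reduces to showing that for almost all $n$, the set of primes dividing $\varphi_k(n)$ ``matches'' the interval of primes up to $Y_k$ well enough that the above product is $(1+o(1))e^{\gamma}\log Y_k$.

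The bulk of the proof goes into establishing, by induction on $k$, the following two structural facts for almost all $n\le x$:
\begin{align*}
\text{(a)} &\quad q \mid \varphi_k(n) \text{ for every prime } q \le Y_k/\psi_1(x);\\
\text{(b)} &\quad \sum_{\substack{q\,\mid\,\varphi_k(n) \\ q > Y_k\psi_2(x)}} 1/q \;=\; o(1/\log_3 x),
\end{align*}
for suitable $\psi_1,\psi_2\to\infty$ sufficiently slowly. Assuming (a) and (b), one substitutes $\log(q/(q-1))=1/q+O(1/q^2)$ into the log of the product, splits the sum at $Y_k$, and invokes Mertens to deduce $\log(\varphi_k(n)/\varphi_{k+1}(n)) = \log Y_k + \gamma + o(1) = \log(k\log_3 n)+\gamma+o(1)$, whence the claim.

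The base case $k=1$ of (a) is classical: a typical $n\le x$ has $(1+o(1))\log_2 x$ prime factors $p$, and by the prime-number theorem in arithmetic progressions the count of $p \equiv 1\pmod{q}$ among these is concentrated around $\log_2 x/(q-1)$, which tends to infinity for $q$ in the stated range; so some such $p$ divides $n$, forcing $q\mid p-1\mid \varphi(n)$. The inductive step is the main obstacle. To pass from $Y_{k-1}$ to $Y_k$ one must show that $\varphi_{k-1}(n)$ not only contains all small primes as divisors but also has enough distinct prime divisors $p$ satisfying the congruence $p\equiv 1\pmod{q}$ for every prime $q\le Y_k$; this requires an iterated analogue of the Erd\H{o}s--Pomerance theorem on $\omega(\varphi(n))$, together with control over the distribution of the prime factors of $\varphi_{k-1}(n)$ across residue classes mod~$q$ for $q$ up to $Y_k$. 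This equidistribution input (the hardest ingredient) is obtained by tracing the congruence conditions back through $k{-}1$ levels to statements about divisors of $n$, where Bombieri--Vinogradov type estimates and Turán--Kubilius concentration take over.

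Property (b) is comparatively routine: using $\varphi_k(n)\le n$, an upper bound $\omega(\varphi_k(n))\ll (\log_2 n)^{k+1}$ (itself proved by induction), and partial summation, one shows that the contribution from primes $q > Y_k\psi_2$ to $\sum_{q\mid\varphi_k(n)}1/q$ is bounded by the tail of a Mertens-type sum, which is $o(1/\log_3 n)$ once $\psi_2\to\infty$. Combining (a) and (b) with Mertens completes the proof.
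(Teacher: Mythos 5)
A preliminary remark: this proposition is quoted in the paper from Erd\H{o}s--Granville--Pomerance--Spiro \cite{MR1084181} and is not proved there, so I can only judge your outline on its own merits. Your global strategy --- write $\varphi_k(n)/\varphi_{k+1}(n)=\prod_{q\mid \varphi_k(n)}q/(q-1)$, show that for almost all $n$ every prime up to essentially $(\log_2 n)^k$ divides $\varphi_k(n)$ while the larger prime divisors contribute $o(1)$ to $\sum 1/q$, and then invoke Mertens --- is the right skeleton and is essentially how this result is proved in the literature. However, as written there are two genuine gaps. The first is step (a) for $k\ge 2$, which you yourself call the main obstacle: you only name ingredients (``iterated Erd\H{o}s--Pomerance'', ``equidistribution'', ``Bombieri--Vinogradov'', ``Tur\'an--Kubilius'') without formulating the inductive statement or closing the moment computation for the existence of chains $q\mid p_1-1,\ p_1\mid p_2-1,\ \dots,\ p_k\mid n$ uniformly over all $q$ up to the threshold. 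Note also that Bombieri--Vinogradov is not the relevant tool: every modulus occurring is of size $(\log_2 x)^{O(1)}$, so Siegel--Walfisz/Mertens in progressions suffices; the real work is the second-moment and union-bound structure over $q$, which in particular forces $\psi_1$ to grow at least like a power of $\log_3 x$ (so ``tending to infinity sufficiently slowly'' is not quite the right requirement, though any choice with $\log\psi_1=o(\log_3 x)$ is compatible with the final asymptotic).

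The second gap is that your argument for (b) fails as stated. From $\omega(\varphi_k(n))\ll(\log_2 n)^{k+1}$ and $\varphi_k(n)\le n$, partial summation gives only $\sum_{q\mid\varphi_k(n),\,q>Y_k\psi_2}1/q\ \ll\ \omega(\varphi_k(n))/(Y_k\psi_2)\ \ll\ \log_2 n/\psi_2$, which is nowhere near $o(1)$ unless $\psi_2\gg\log_2 n$, and that choice ruins the asymptotic since then $\log(Y_k\psi_2)\sim(k+1)\log_3 n$. The correct (and standard) route is a first-moment estimate over $n$: iterating Brun--Titchmarsh along chains yields $\#\{n\le x: q\mid\varphi_k(n)\}\ll x(\log_2 x)^k/q$ up to harmless prime-power terms, whence $\frac1x\sum_{n\le x}\ \sum_{q\mid\varphi_k(n),\,q>Y_k\psi_2}1/q\ \ll\ \sum_{q>Y_k\psi_2}(\log_2 x)^k/q^2\ \ll\ 1/(\psi_2\log_3 x)$, and Markov's inequality then gives the desired $o(1)$ bound for almost all $n$. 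This is exactly the style of large-prime average argument used in Propositions \ref{LPprop1} and \ref{LPprop2} of the present paper, and replacing your partial-summation step by it (together with an honest execution of the inductive step in (a)) would turn your outline into a proof.
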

 
In 1955 Erd\H{o}s established the normal order of $\log (n/ \lambda(n))$ in \cite{MR0079031}.  This result was refined by Erd\H{o}s, Pomerance, and Schmutz in \cite{MR1121092} where they proved the following result.
\begin{prop}
For almost all $n\leq x$, 
\[
\log \frac{n}{\lambda(n)} = \log_2 n (\log_3 n + A + O((\log_3 n)^{-1 + \varepsilon}),
\]
where
\[
A = -1 + \sum_{q \textrm{ prime}} \frac{q}{(q-1)^2} = .2269688...,
\]
and $\varepsilon >0$ is fixed but arbitrarily small.

\end{prop}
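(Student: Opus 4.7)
The plan is to reduce the estimation of $\log(n/\lambda(n))$ to a moment analysis over the distribution of prime factors of $n$ in arithmetic progressions.

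First, by Schoenberg's distribution theorem, $n/\varphi(n)$ has a continuous distribution function tending to $1$, so for almost all $n \leq x$ one has $\log(n/\varphi(n)) \ll \psi(x)$ for any $\psi \to \infty$, which is negligible compared with the allowed error $\log_2 n \cdot (\log_3 n)^{-1+\varepsilon}$. Thus it suffices to estimate $\log(\varphi(n)/\lambda(n))$; moreover, the contribution from the squarefull part of $n$ to this quantity is bounded for almost all $n$, so I may assume $n$ is squarefree. For squarefree $n$,
\[
\log\frac{\varphi(n)}{\lambda(n)} = \sum_q \log q \,\bigl(W_q(n) - M_q(n)\bigr) = \sum_{q,\alpha\geq 1} \log q \cdot \max\bigl(0,\, N(q,\alpha) - 1\bigr),
\]
where $W_q = \sum_{p\mid n} v_q(p-1)$, $M_q = \max_{p\mid n} v_q(p-1)$, and $N(q,\alpha) = |\{p\mid n : q^\alpha \mid p-1\}|$; the second equality uses the identity $M_q = \sum_\alpha \mathbf{1}[N(q,\alpha)\geq 1]$.

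Second, I compute the mean of $S(n) := \sum_{q,\alpha} \log q \cdot \max(0, N(q,\alpha)-1)$. By Mertens' theorem for arithmetic progressions (with Siegel--Walfisz giving uniformity for $q^\alpha \leq (\log x)^A$), $\mathbb{E}[N(q,\alpha)] = \log_2 x/\varphi(q^\alpha) + O(1)$. Modeling $N(q,\alpha)$ as approximately Poisson with mean $\mu_{q,\alpha} := \log_2 x/\varphi(q^\alpha)$, one has $\mathbb{E}\max(0, N-1) \approx \mu_{q,\alpha} - (1 - e^{-\mu_{q,\alpha}})$. The dominant contribution comes from pairs with $q^\alpha \lesssim \log_2 x$ (where $\mu_{q,\alpha} \gg 1$), yielding
\[
\log_2 x \sum_{q \leq \log_2 x} \frac{\log q}{q-1} + O(\log_2 x) \;\sim\; \log_2 x \log_3 x,
\]
while the off-threshold and tail corrections assemble into the $A\log_2 x$ term after careful bookkeeping.

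Third, to upgrade from mean to normal order, I would carry out a second-moment/variance estimate on $S(n)$ exploiting that the indicators $\mathbf{1}_{p\mid n}$ for distinct primes $p \leq x$ are nearly independent (pairwise covariance $O(1/x)$). Each $W_q$ has variance $\ll \log_2 x \cdot q/(q-1)^2$; summing over $q$ and $\alpha$ weighted by $(\log q)^2$ gives $\mathrm{Var}(S(n)) = O(\log_2 x \log_3 x)$, and Chebyshev's inequality yields $|S(n) - \mathbb{E} S(n)| \ll \sqrt{\log_2 n \log_3 n}\cdot\psi(x)$ outside a set of density $o(1)$---comfortably within the claimed error.

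The principal obstacle is pinning down the sharp error $O((\log_3 n)^{-1+\varepsilon})$ and the precise constant $A$. The sharpness relies on (i) uniform prime-counting asymptotics in arithmetic progressions with moduli $q^\alpha$ up to the threshold $\log_2 x$, essentially requiring Siegel--Walfisz (or Bombieri--Vinogradov), and (ii) careful analysis of the transition regime $\mu_{q,\alpha} \sim 1$, where the Poisson approximation breaks down and the expression $\mu_{q,\alpha} - (1 - e^{-\mu_{q,\alpha}})$ interpolates nontrivially between its large-$\mu$ and small-$\mu$ asymptotics. The $(\log_3 n)^{-\varepsilon}$ saving reflects the residual error in this boundary regime, and matching this saving is the technical heart of the proof.
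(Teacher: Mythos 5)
You should note first that the paper does not prove this statement at all: it is quoted (with citation to Erd\H{o}s, Pomerance and Schmutz \cite{MR1121092}) as background, so your proposal can only be judged on its own merits, not against an in-paper argument. Your opening reductions are sound and are indeed the standard ones: discarding $\log(n/\varphi(n))$ and the squarefull part of $n$ on a density-one set, and writing, for squarefree $n$,
\begin{equation*}
\log\frac{\varphi(n)}{\lambda(n)}=\sum_{q}\log q\,\bigl(W_q(n)-M_q(n)\bigr)=\sum_{q,\,\alpha\ge 1}\log q\,\max\bigl(0,N(q,\alpha)-1\bigr)
\end{equation*}
is correct and is the right combinatorial skeleton.

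The genuine gap is that everything the proposition asserts beyond the crude normal order $\log_2 n\log_3 n$ is exactly what your sketch defers. Your main-term computation ends at $\log_2 x\sum_{q\le\log_2 x}\log q/(q-1)+O(\log_2 x)$, but the secondary term $A\log_2 x$ lives precisely inside that $O(\log_2 x)$: the constant $A=-1+\sum_q q/(q-1)^2$ arises from the full sum $\sum_\alpha 1/\varphi(q^\alpha)=q/(q-1)^2$ together with the subtracted indicator sum $\sum_{q^\alpha\le\log_2 x}\log q\sim\log_2 x$ (the $-1$), and the transition range $q^\alpha\asymp\log_2 x$ contributes at the same $O(\log_2 x)$ scale, so ``careful bookkeeping'' is not a dispensable refinement but the content of the theorem. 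Compounding this, the Poisson model for $N(q,\alpha)$ is a heuristic, not an estimate: to reach an error of $\log_2 n(\log_3 n)^{-1+\varepsilon}$ one must control $\Pr(N(q,\alpha)\ge 1)$ by inclusion--exclusion/sieve arguments with uniform prime counts in progressions to moduli up to roughly $\log_2 x$, and quantify how far the truth deviates from $1-e^{-\mu}$ in the regime $\mu\asymp 1$; you name this as the obstacle but supply no mechanism for it. Finally, the concentration step is asserted rather than proved: $S(n)$ is not an additive function because of the $\max(0,N-1)$ truncation, so neither Tur\'an--Kubilius nor a termwise variance bound on the $W_q$ applies directly, and the cross-covariances between different moduli $q_1^{\alpha_1},q_2^{\alpha_2}$ (the analogue of the mixed terms handled in this paper's estimate of $B(x)$ for $g_0$) must be bounded before Chebyshev can be invoked. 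As it stands the proposal would prove something like $\log(n/\lambda(n))=(1+o(1))\log_2 n\log_3 n$ for almost all $n$, but not the constant $A$ nor the $(\log_3 n)^{-1+\varepsilon}$ error term.
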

The author is undertaking the analysis of Theorem 1 to obtain a more accurate asymptotic formula of a form more closely resembling the previous proposition. 

Martin and Pomerance subsequently considered the question of understanding the behavior of $\all$.  In \cite{MP} they proved
\begin{prop}\label{LPPropMP}
For almost all $n$, 
\begin{align} \log\frac n{\lambda\lambda(n)} = (1+o(1))(\log_2 n )^2 \log_3 n.\label{LPeq50}
\end{align}
\end{prop}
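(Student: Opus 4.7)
I split
\[
\log\frac{n}{\lambda\lambda(n)} = \log\frac{n}{\lambda(n)} + \log\frac{\lambda(n)}{\lambda\lambda(n)}.
\]
The first summand is $(1+o(1))\log_2 n\log_3 n$ for almost all $n$ by the Erd\H{o}s--Pomerance--Schmutz estimate above, which is absorbed into the error term of the claimed asymptotic. Hence it suffices to show that for almost all $n$,
\[
\log\frac{\lambda(n)}{\lambda\lambda(n)} = (1+o(1))(\log_2 n)^2\log_3 n.
\]

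\textbf{Reduction to an isolation count.} Setting $m=\lambda(n)$, formulas (\ref{LPlambda1})--(\ref{LPlambda2}) give
\[
\log\frac{m}{\lambda(m)} = \sum_q \bigl(v_q(m)-v_q(\lambda(m))\bigr)\log q
\]
and, for odd $q$ with $v_q(m)\ge 1$,
\[
v_q(\lambda(m)) = \max\Bigl(v_q(m)-1,\ \max_{\substack{q'\mid m\\ q'\ne q}}v_q(q'-1)\Bigr).
\]
In the generic case $v_q(m)=1$, the $q$-contribution equals $\log q$ precisely when $q$ is ``isolated'' in $m$ (no other $q'\mid m$ satisfies $q'\equiv 1\pmod q$), and vanishes otherwise. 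After absorbing the negligible contributions from $v_q(m)\ge 2$ and from $q=2$, the problem reduces to showing
\[
\sum_q \log q\cdot\mathbf{1}[q\mid\lambda(n),\ q\text{ isolated in }\lambda(n)] = (1+o(1))(\log_2 n)^2\log_3 n
\]
for almost all $n$.

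\textbf{Estimation strategy.} I attack this sum with two probabilistic inputs. First, up to a negligible contribution from $q^2\mid n$, the event $q\mid\lambda(n)$ is equivalent to $\exists p\mid n$ with $p\equiv 1\pmod q$; Mertens' theorem in arithmetic progressions combined with a Tur\'an--Kubilius variance bound then gives $\Pr[q\mid\lambda(n)] = 1-\exp(-\log_2 n/(q-1))+o(1)$ in the relevant range of $q$. Second, isolation requires that no prime $q'\equiv 1\pmod q$ divides $\lambda(n)$, which unwinds to a condition on primes $p\mid n$ lying in arithmetic progressions modulo $qq'$; here Bombieri--Vinogradov supplies the necessary uniformity. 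Slicing $\sum_q$ into dyadic blocks and combining these two estimates localises the main contribution to a specific range of $q$ and yields the main term $(\log_2 n)^2\log_3 n$, while a further Tur\'an--Kubilius second-moment argument upgrades the expected-value computation to a normal-order statement (with quantitative control of the exceptional set).

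\textbf{Principal obstacle.} The key difficulty is that $m=\lambda(n)$ is far from a generic integer: it carries $\sim(\log_2 n)^2$ distinct prime divisors---inherited from the $\sim\log_2 n$ shifted primes $p-1$ for $p\mid n$, each contributing $\sim\log_2 n$ of its own prime factors---rather than the $\sim\log_2 n$ factors one sees for a random integer of comparable magnitude. A direct application of Erd\H{o}s--Pomerance--Schmutz to $m$ therefore only recovers $\sim\log_2 n\log_3 n$, short by the decisive factor of $\log_2 n$. The isolation/non-isolation analysis must instead be carried out at the level of $\lambda(n)$ itself, which forces joint control of primes $p\mid n$ in arithmetic progressions to two simultaneous moduli $q$ and $q'$; this is precisely the regime in which level-of-distribution theorems (Bombieri--Vinogradov), rather than single-modulus prime-counting estimates, become indispensable for the variance bounds.
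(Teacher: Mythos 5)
First, note that the paper does not prove this proposition at all: it is quoted from Martin and Pomerance \cite{MP}, and the result is the main theorem of a substantial paper, so a correct argument cannot be a one-page sketch. Your opening split $\log(n/\lambda\lambda(n))=\log(n/\lambda(n))+\log(\lambda(n)/\lambda\lambda(n))$ and the formula $v_q(\lambda(m))=\max\bigl(v_q(m)-1,\max_{q'\mid m,\,q'\neq q}v_q(q'-1)\bigr)$ are fine, but the ``reduction to an isolation count'' is where the argument genuinely breaks. Since $\lambda(m)$ need not divide $m$ (e.g.\ $n=31$, $\lambda(n)=30$, $\lambda\lambda(n)=4\nmid 30$), the identity $\log(m/\lambda(m))=\sum_q(v_q(m)-v_q(\lambda(m)))\log q$ contains large \emph{negative} terms coming from primes $s$ with $v_s(\lambda\lambda(n))>v_s(\lambda(n))$ --- essentially the prime factors of the shifted primes $q-1$ for $q\mid\lambda(n)$ --- and you discard these as if only $v_q(m)\ge 2$ and $q=2$ needed absorbing. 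Correspondingly, the positive ``isolated'' part is not of size $(\log_2 n)^2\log_3 n$: almost every large prime factor of $\lambda(n)$ is automatically isolated (no prime $\equiv 1$ pmod a huge $q$ will divide $\lambda(n)$), and already one such prime, e.g.\ the largest prime factor of $P(n)-1$ with $P(n)$ the largest prime factor of $n$, typically contributes a power of $\log n$ to your sum, which dwarfs $(\log_2 n)^2\log_3 n$. So the reduced statement you propose to prove is false; the huge positive isolated contribution is cancelled against the negative terms you dropped, and the true main term only emerges after this cancellation.

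The actual mechanism (and the heart of Martin--Pomerance) is the analogue of Erd\H{o}s--Pomerance--Schmutz one level deeper: $\lambda(n)$ has about $\tfrac12(\log_2 n)^2$ distinct prime factors $q$, and for each \emph{small} prime $s$ about $\tfrac12(\log_2 n)^2/(s-1)$ of the shifted primes $q-1$ are divisible by $s$, while the lcm retains only the maximal power of $s$; summing $\log s$ times this multiplicity excess over $s$ up to roughly $(\log_2 n)^2$ produces $(1+o(1))(\log_2 n)^2\log_3 n$. Your ``principal obstacle'' paragraph correctly senses that $\omega(\lambda(n))\sim\tfrac12(\log_2 n)^2$ is the decisive input, but the bookkeeping above attaches it to the wrong event (isolation of simple prime factors) rather than to collisions of small primes among the $q-1$. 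Finally, even granting a correct reduction, the ``estimation strategy'' paragraph is a plan rather than a proof: ``$\Pr[q\mid\lambda(n)]$'' is not a defined measure, the claimed uniformity in $q$ of the Mertens/Tur\'an--Kubilius and Bombieri--Vinogradov inputs is exactly where the technical work lies, and no variance computation or exceptional-set bound is actually carried out.
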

Recently Harland \cite{harland} proved a conjecture of Martin and Pomerance concerning the behavior of the higher iterates of $\lambda(n)$:
\begin{prop}
For each $k\geq 1$,
\begin{align*}
		\log \frac{n}{\lambda_k(n)} = \bigg(\frac{1}{(k-1)!}+o(1)\bigg)(\log_2 n)^k \log_3 n,
\end{align*}
for almost all $n$.
\end{prop}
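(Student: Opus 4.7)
The plan is to argue by induction on $k$, with the Erd\H{o}s--Pomerance--Schmutz proposition quoted above serving as the base case $k=1$. Using the telescoping identity
\[
\log \frac{n}{\lambda_k(n)} = \log \frac{n}{\lambda_{k-1}(n)} + \log \frac{\lambda_{k-1}(n)}{\lambda_k(n)},
\]
the inductive hypothesis furnishes the first summand, which is of order $(\log_2 n)^{k-1}\log_3 n$ and is therefore absorbed into the $o(1)$ factor of the target statement. The whole proof thus reduces to establishing
\[
\log \frac{\lambda_{k-1}(n)}{\lambda_k(n)} = (1+o(1))\,\frac{(\log_2 n)^k \log_3 n}{(k-1)!} \qquad \text{for almost all } n.
\]

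The central difficulty is that $m := \lambda_{k-1}(n)$ is far from a ``typical'' integer of its size: each application of $\lambda$ manufactures many small prime factors through the $p-1$ pieces of $\lambda(p^a)$, so a blind application of the Erd\H{o}s--Pomerance--Schmutz theorem to $m$ would produce only $\log_2 n \log_3 n$, off from the truth by the factor $(\log_2 n)^{k-1}/(k-1)!$ that encodes the inductive content. The bulk of the proof must therefore be a structural (``anatomy'') analysis of the prime factorization of $\lambda_{k-1}(n)$ for almost all $n$. I would proceed by establishing, via a simultaneous induction on $k$, an estimate for $v_q(\lambda_{k-1}(n))$ that captures the multinomial weight: roughly, $v_q(\lambda_{k-1}(n))$ is determined by the tree of chains $p_0 \mid n,\ p_1 \mid p_0-1,\ \ldots,\ p_{k-1} \mid p_{k-2}-1$ with $q \mid p_{k-1}-1$, and a Tur\'an--Kubilius-type second-moment bound at each of the $k-1$ levels shows that the count of such chains, in its leading asymptotic, carries a factor $1/(k-1)!$ coming from the volume of a suitable simplex of prime sizes.

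With the anatomy lemma in hand, I would mimic the Erd\H{o}s--Pomerance--Schmutz computation
\[
\log \frac{m}{\lambda(m)} = \sum_q \bigl(v_q(m) - v_q(\lambda(m))\bigr)\log q
\]
at $m = \lambda_{k-1}(n)$; splitting the $q$-sum into the familiar small/medium/large ranges and inserting the anatomy estimate produces the main term $(\log_2 n)^k \log_3 n / (k-1)!$. The principal obstacle I anticipate is the propagation and uniformity of exceptional sets across the $k-1$ levels: at each level one discards an $o(1)$-density set of $n$ on which the relevant second-moment bound fails, and one must show the union of these sets still has density $o(1)$ while the ``typical'' $n$ continues to support the level-by-level prime counting. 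This is essentially the program carried out by Martin--Pomerance for $k=2$ in~\cite{MP}, now iterated with substantially more intricate combinatorial bookkeeping, and the need for uniform quantitative estimates of the exceptional sets across all $k$ levels is what makes the generalization nontrivial rather than a mechanical extension.
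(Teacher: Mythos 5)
First, a point of orientation: the paper does not prove this proposition at all --- it is Harland's theorem, quoted from \cite{harland} as background (it was a conjecture of Martin and Pomerance that their own $k=2$ method did not settle). So there is no in-paper proof to compare against, and your proposal has to be judged on its own terms.

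On those terms it is a reasonable heuristic outline but not a proof, and the gap is precisely where you place the ellipsis. The telescoping/induction step is trivial arithmetic and buys essentially nothing: the inductive hypothesis only controls the \emph{size} of $\lambda_{k-1}(n)$, whereas the quantity $\log\bigl(\lambda_{k-1}(n)/\lambda_k(n)\bigr)=\sum_q\bigl(\nu_q(\lambda_{k-1}(n))-\nu_q(\lambda_k(n))\bigr)\log q$ requires the full multiplicative \emph{structure} of $\lambda_{k-1}(n)$, which the induction does not supply. All of the content is therefore deferred to your ``anatomy lemma,'' which is never stated precisely, let alone proved; but that lemma \emph{is} the theorem. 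Two concrete obstructions: (i) the Tur\'an--Kubilius inequality applies to additive functions of $n$, while the chain-counting quantities you need (e.g.\ $\sum_{p\mid\lambda_{k-2}(n)}\nu_q(p-1)$, with $p$ ranging over primes reachable by a divisibility chain $p\mid p_{k-3}-1,\ \ldots,\ p_0\mid n$) are not additive in $n$, so ``a Tur\'an--Kubilius-type second-moment bound at each level'' cannot simply be invoked --- controlling the variance of chain counts means estimating correlations of primes lying simultaneously in several progressions, which is exactly where Martin--Pomerance's $k=2$ argument already required bespoke lemmas (compare Lemma \ref{LPlem2} of this paper, imported from \cite{MP}) and where their method broke down for $k\geq 3$; (ii) the lcm structure of $\lambda$ forces you to prove both that $\nu_q(\lambda_k(n))$ (a maximum over chains, plus prime-power contributions) is negligible and that the chain count for $\lambda_{k-1}(n)$ has the matching upper and lower bounds producing the constant $1/(k-1)!$, uniformly enough that the $k-1$ exceptional sets union to density $o(1)$. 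You flag these issues, but flagging them is not resolving them; as written, the factor $1/(k-1)!$ is asserted via a simplex-volume heuristic rather than derived. So the proposal is a plausible program --- consistent with the known heuristic that $\omega(\lambda_{k-1}(n))\sim (\log_2 n)^k/k!$ for almost all $n$, which indeed predicts the main term --- but it has a genuine gap at its core and should not be represented as a proof of Harland's theorem.
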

Banks, Luca, Saidak, and St\u{a}nic\u{a} \cite{MR2187587} studied the the compositions of $\lambda$ and $\varphi$.  In particular, they studied set of $n$ on which $\alf = \afl.$  In their paper, they also established the following:
\begin{prop}\label{LPPropBLSS}
For almost all $n$, 
\begin{align}
\log\frac{n}{\afl} & = (1+o(1))\log_2 n \log_3 n, \text{ and} \label{LPeq52}\\
\log \frac{n}{\alf} & = (1+o(1)) (\log_2n)^2 \log_3 n.\label{LPeq51}
\end{align}
Consequently,
$\displaystyle \log \frac{\afl}{\alf}$ has normal order $(\log_2n)^2 \log_3 n.$
\end{prop}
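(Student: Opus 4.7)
The proposition contains three claims: the normal orders of $\log(n/\afl)$, $\log(n/\alf)$, and (as a consequence) $\log(\afl/\alf)$. I would address them in that order.

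For~\eqref{LPeq52}, split $\log(n/\afl) = \log(n/\lambda(n)) + \log(\lambda(n)/\phi\lambda(n))$. The Erd\H{o}s--Pomerance--Schmutz estimate cited earlier gives the first summand as $(1+o(1))\log_2 n\log_3 n$ for almost all $n$. For the second, use
\begin{align*}
\frac{\lambda(n)}{\phi\lambda(n)} = \prod_{p\mid\lambda(n)}\left(1-\tfrac{1}{p}\right)^{-1}.
\end{align*}
For almost all $n$, $\omega(\lambda(n)) \le (\log_2 n)^{2+o(1)}$ (a standard second-moment calculation on $\#\{p\mid n : p\equiv 1\pmod{q}\}$ over small primes $q$). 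Combined with the deterministic inequality $\sum_{p\mid m}1/p \le \log\log\omega(m) + O(1)$, this gives $\log(\lambda(n)/\phi\lambda(n)) \le \log_4 n + O(1) = o(\log_2 n\log_3 n)$, which establishes~\eqref{LPeq52}.

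For~\eqref{LPeq51}, split $\log(n/\alf) = \log(n/\phi(n)) + \log(\phi(n)/\lambda\phi(n))$. The first summand is $O(\log_3 n)$ by Gronwall's uniform bound $n/\phi(n) \ll \log_2 n$. For the second, reapply the Martin--Pomerance machinery of Proposition~\ref{LPPropMP} with $m = \phi(n)$ in place of $n$. The crucial input is $\omega(\phi(n)) \asymp (\log_2 n)^2$ for almost all $n$, which follows from a second-moment computation on $\omega\!\left(\prod_{p\mid n}(p-1)\right)$, sieving the primes $p\mid n$ jointly with their shifts. The underlying heuristic of the Erd\H{o}s--Pomerance--Schmutz and Martin--Pomerance framework is $\log(m/\lambda(m)) \sim \omega(m)\log\omega(m)$ (which correctly recovers both~\eqref{LPeq50} and the Erd\H{o}s--Pomerance--Schmutz asymptotic); applied with $\omega(m) = \omega(\phi(n)) \asymp (\log_2 n)^2$ this gives $\log(\phi(n)/\lambda\phi(n)) = (1+o(1))(\log_2 n)^2\log_3 n$, establishing~\eqref{LPeq51}. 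The ``consequently'' claim is then immediate: $\log(\afl/\alf) = \log(n/\alf) - \log(n/\afl) = (1+o(1))(\log_2 n)^2\log_3 n$, since the $(\log_2 n)^2\log_3 n$ main term dominates the subtracted $\log_2 n\log_3 n$ term.

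The main obstacle will be~\eqref{LPeq51}. The Martin--Pomerance proof assumes $n$ is a typical integer of size $x$, whereas $\phi(n)$ is heavily biased toward having many more small prime factors than a generic integer of comparable magnitude. The technical challenge is to adapt their sieve and concentration estimates to the pushforward measure induced by $n\mapsto\phi(n)$, uniformly on a density-$1$ set of $n$. Concretely, this requires a precise second-moment bound on $\omega(\phi(n))$ together with a conditional tail bound on the valuations $v_q(\phi(n))$ that exceed $v_q(\lambda\phi(n))$, both of which rely on jointly sieving primes $p\mid n$ with primes $q\mid p-1$.
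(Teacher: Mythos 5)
The paper does not prove this proposition; it is quoted from Banks, Luca, Saidak and St\u{a}nic\u{a}, with the remark that the proof ``uses a simple clever argument that rests on the theorem of Martin and Pomerance.'' Your treatment of \eqref{LPeq52} is sound and essentially the intended one: writing $\log(n/\phi\lambda(n)) = \log(n/\lambda(n)) + \log(\lambda(n)/\phi\lambda(n))$, the Erd\H{o}s--Pomerance--Schmutz asymptotic handles the first term, and since $\lambda(n)$ and $\phi(n)$ share a radical the second term is $\sum_{p\mid\lambda(n)}(-\log(1-1/p)) \le \log\log\omega(\phi(n)) + O(1) \ll \log_4 n$ for almost all $n$, using $\omega(\phi(n)) \le (\log_2 n)^{2+o(1)}$ a.e. This is a lower-order correction and \eqref{LPeq52} follows.

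The genuine gap is in \eqref{LPeq51}, and your closing paragraph concedes it. You propose to rerun the Martin--Pomerance analysis on the atypical integer $m=\phi(n)$, supported by the heuristic $\log(m/\lambda(m)) \sim \omega(m)\log\omega(m)$; but that heuristic is not a theorem for general $m$, and transporting Martin--Pomerance's sieve and Tur\'an--Kubilius estimates through the pushforward distribution of $\phi(n)$ is precisely the substantial technical work that the ``simple clever argument'' is designed to avoid. The intended route is pure divisibility. Since $\lambda(n)\mid\phi(n)$ and $a\mid b$ implies $\lambda(a)\mid\lambda(b)$, one has $\lambda\lambda(n)\mid\lambda\phi(n)$; moreover, because $\phi(n)$ and $\lambda(n)$ have identical prime divisors and $\lambda(q^a)/\lambda(q^b)$ divides $q^{a-b}$ for $a\ge b\ge 1$, a local check at each prime gives $\lambda\phi(n)/\lambda\lambda(n)\mid \phi(n)/\lambda(n)$. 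Therefore
\begin{align*}
\log\frac{n}{\lambda\lambda(n)} - \log\frac{n}{\lambda(n)} \;\le\; \log\frac{n}{\lambda\phi(n)} \;\le\; \log\frac{n}{\lambda\lambda(n)},
\end{align*}
and applying Martin--Pomerance to the outer quantities and Erd\H{o}s to $\log(n/\lambda(n))$ (which is $O(\log_2 n\log_3 n)$, lower order) yields \eqref{LPeq51} immediately. Your ``consequently'' step is fine once both asymptotics are established; what your proposal is missing is the observation that \eqref{LPeq51} follows from Martin--Pomerance as a black box via these two divisibility facts, rather than by attempting to reprove it for $\phi(n)$.
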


The proof of Proposition \ref{LPPropBLSS} uses a simple clever argument that rests on the theorem of Martin and Pomerance.
 It is interesting to see what we may obtain trivially from Propositions \ref{LPPropMP} and \ref{LPPropBLSS}.  Subtracting \eqref{LPeq52} from \eqref{LPeq50} gives an asymptotic formula for the comparison between $\afl$ and $\all$,
\begin{align*}
\log \frac{\afl}{\all} \sim (\log_2 n)^2 \log_3 n,
\end{align*}
for almost all $n$.
However, if we subtract \eqref{LPeq51} from \eqref{LPeq50}, the main terms cancel and we are left with
\begin{align*}
\log \frac{\lambda\vphi(n)}{\lambda\lambda(n)} = o((\log_2 n)^2 \log_3 n),
\end{align*}
for almost all $n$.  This relation is interesting because it leads one to seek a more accurate asymptotic formula. This more accurate result is the content of Theorem \ref{LPthm}.  


\footnote{Personal note: What about comparing $\varphi\varphi$ to $\varphi\lambda$?  If this is easy, why not state it?  Don't we want to state how Theorem 1 is so great because it completes the picture of single compositions?}

\section{Notation and Useful Results}
Let $a,n\in\mathbb{Z}$.  Then the Brun-Titchmarsh inequality is the asymptotic relationship that
\begin{align}\label{LPeq40}
		\pi(z; n, a) \ll \frac{z}{\phi(n) \log (z/n)} \qquad (z>n),
\end{align}
where $\pi(z; n,a)$ is the number of primes congruent to $a\mod n$ up to $z.$  We will be primarily concerned with implications of the Brun-Titchmarsh inequality in the case that $a=1$.  For convenience, define $\mathcal{P}_n$ to be the set of primes congruent to $1 \mod n$, and for a given integer $m$, define the greatest common divisor of $m$ and $\mathcal{P}_n$, denoted $(m,\mathcal{P}_n)$, to be the product of the primes congruent to $1\mod n$ that divides $m$, or $1$ if none exist.  We will frequently use the following weaker form of \eqref{LPeq40} without mention.
\begin{lem}[A Brun-Titchmarsh Inequality]\label{LPlemma:bt}
For all $z>e^e$, 
\begin{align}\label{LPeq41}
\sumst{p\leq z \\ p \in \mathcal{P}_n} \frac1p \ll \frac{\log \log z }{\phi(n)}.
\end{align}
\end{lem}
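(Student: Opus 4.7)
The plan is to deduce the bound by partial summation from the stronger form of the Brun-Titchmarsh inequality, \eqref{LPeq40}. Writing $\pi(t;n,1)$ for the number of primes $p\leq t$ with $p\equiv 1 \pmod{n}$, and noting that $\pi(t;n,1)=0$ for $t\leq n$, partial summation yields
\begin{align*}
\sum_{\substack{p\leq z \\ p\in\mathcal{P}_n}} \frac{1}{p} = \frac{\pi(z;n,1)}{z} + \int_{n+1}^{z} \frac{\pi(t;n,1)}{t^{2}}\, dt.
\end{align*}

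I would then split the integral at $t=2n$ to isolate the range on which Brun-Titchmarsh is numerically useful. On the short range $n+1\leq t\leq 2n$, any two primes congruent to $1\pmod{n}$ differ by a positive multiple of $n$, so $\pi(t;n,1)\leq 1$ throughout and the contribution to the integral is at most $\int_{n+1}^{2n} t^{-2}\,dt \ll 1/n$. Since we assume $z>e^{e}$ we have $\log\log z\geq 1$, and $\phi(n)\leq n$ gives $1/n \leq \log\log z / \phi(n)$, so this piece is absorbed into the target estimate.

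On the range $t\geq 2n$, \eqref{LPeq40} supplies $\pi(t;n,1)\ll t/(\phi(n)\log(t/n))$, and the substitution $u=\log(t/n)$ reduces the remaining integral to
\begin{align*}
\frac{1}{\phi(n)}\int_{\log 2}^{\log(z/n)} \frac{du}{u} \;\ll\; \frac{\log\log z}{\phi(n)}.
\end{align*}
The boundary term $\pi(z;n,1)/z$ is dominated by $1/(\phi(n)\log(z/n))$, which is smaller still. The degenerate case $z<2n$ is trivial: the sum then has at most a single term bounded by $1/(n+1)$, again absorbed in $\log\log z/\phi(n)$ under the $z>e^{e}$ hypothesis.

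I do not expect a genuine obstacle here; this is a routine partial summation. The only subtlety worth flagging is that the Brun-Titchmarsh factor $1/\log(t/n)$ degenerates as $t\to n^{+}$, which is precisely why the short interval $[n+1,2n]$ must be handled separately by the crude spacing argument rather than by direct substitution into \eqref{LPeq40}.
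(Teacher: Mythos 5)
Your argument is correct and follows exactly the route the paper indicates, namely partial summation applied to the Brun--Titchmarsh bound \eqref{LPeq40}, with the only added content being the (necessary) separate treatment of the range $t\leq 2n$ where the factor $1/\log(t/n)$ in \eqref{LPeq40} is not useful. The paper gives no details beyond the one-line remark ``by partial summation,'' and your fleshed-out version supplies precisely the care that remark elides.
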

One may obtain \eqref{LPeq41} from \eqref{LPeq40} by partial summation.
We will also use the following prime estimates stated in \cite{MP}.
\begin{lem}\label{LPlemma:primesums}
Let $z>e$.  Then we have the following: 
\begin{align*}
&\sum_{p \leq z} \log p \ll z, \qquad \sum_{p\leq z}\frac{\log p}{p} \ll \log z,\qquad \sum_{p\leq z} \frac{\log^2 p}{p} \ll \log^2 z, \\
&\sum_{p > z}\frac{\log p}{p^2} \ll \frac 1z,\text{ and} \qquad \sum_{p>z}\frac{1}{p^2} \ll \frac{1}{z\log z},
\end{align*}
\end{lem}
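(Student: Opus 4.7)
All five bounds are standard consequences of Chebyshev-type prime estimates together with partial summation, so the plan is to take as input $\theta(z) := \sum_{p \leq z} \log p \ll z$ and its corollary $\pi(z) \ll z/\log z$, and then derive each line by a single Abel summation.

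The first bound \emph{is} Chebyshev's estimate on $\theta$. For the second, a direct partial summation gives
\[
\sum_{p \leq z} \frac{\log p}{p} = \frac{\theta(z)}{z} + \int_2^z \frac{\theta(t)}{t^2}\,dt \ll 1 + \log z \ll \log z,
\]
and I would then bootstrap this (applying Abel summation once more with weight $\log p$) to obtain $\sum_{p \leq z} (\log p)^2/p \ll (\log z)^2$. For the two tail sums, partial summation yields
\[
\sum_{p > z} \frac{\log p}{p^2} \ll \frac{\theta(z)}{z^2} + \int_z^\infty \frac{\theta(t)}{t^3}\,dt \ll \frac{1}{z},
\]
using $\theta(t) \ll t$, and
\[
\sum_{p > z} \frac{1}{p^2} \ll \frac{\pi(z)}{z^2} + \int_z^\infty \frac{\pi(t)}{t^3}\,dt \ll \frac{1}{z \log z},
\]
using $\pi(t) \ll t/\log t$ and the observation that $1/\log t \leq 1/\log z$ for $t \geq z$, which lets one pull that factor out of the integral.

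There is no genuine obstacle here. Every step is a routine Abel summation once Chebyshev's bound is in hand. The only tiny care-point is the $1/(z\log z)$ bound in the final line: one must handle the $1/\log t$ factor correctly to preserve the $\log z$ in the denominator, but this is immediate from the monotonicity of $1/\log t$ on $[z,\infty)$.
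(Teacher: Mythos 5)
Your proof is correct, and it is the same technique (Abel/partial summation) as the paper but anchored to a different input: the paper derives all five estimates from Mertens' asymptotic $M(z)=\sum_{p\le z}(\log p)/p=\log z+O(1)$, whereas you start from Chebyshev's upper bounds $\theta(z)\ll z$ and $\pi(z)\ll z/\log z$. Each of your five derivations checks out, including the careful handling of the monotone factor $1/\log t$ in the last one. The trade-off is minor but real. Starting from Mertens, the second bound is immediate (it \emph{is} Mertens) but the tail estimate $\sum_{p>z}(\log p)/p^2$ requires an exact cancellation of two $(\log z)/z$ terms between the boundary term and the integral $\int_z^\infty M(t)/t^2\,dt$; indeed the paper's own illustrative computation fumbles exactly there, asserting a final bound of $1/z^2$, which is both inconsistent with its intermediate line and stronger than the (correct) $1/z$ claimed in the lemma. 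Your route via $\theta(t)\ll t$ avoids any cancellation: the integrand $\theta(t)/t^3\ll 1/t^2$ integrates directly to $1/z$, so your argument is in this respect more robust. Conversely, your second bound needs one extra partial summation that the paper gets for free. Since Chebyshev's estimates and Mertens' theorem are both elementary and standard, either foundation is acceptable; nothing is missing from your argument.
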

These estimates follow via partial summation applied to Mertens' estimate $ M(z) = \sum_{p\leq z} (\log p)/p = \log z + O(1).$  We illustrate the derivation of the first tail estimate.  One writes the Riemann-Steltjies integral
\begin{align*}
\sum_{p > z}(\log p)/p^2&= \int_{z}^{\infty} 1/t~\text{d}M(t) = M(t)/t\bigg|^{\infty}_z + \int_{z}^{\infty} M(t)/t^2 ~\text{d}t\\
	&=(\log z)/z+ O(1/z) + \int_{z}^{\infty} (\log t)/t^2+ O(1/t^2) ~\text{d}t\\
	&\ll 1/z^2,
\end{align*}
as required.

We remind the reader that we will be writing the composition of two arithmetic functions $f(n)$ and $g(n)$ as $fg(n)$, and subscripts will be used with functions to indicate the number of times a function will be composed with itself (ie $\log_2 n = \log\log n$).  The multiplicity to which a prime $q$ divides $n$ is denoted by $\nu_q(n).$  In what follows, the variables $p,q,r$ will be reserved for primes.  {\em Throughout, we denote $y=y(x) = \log_2 x.$}  The function $\psi(x)$ denotes a function tending to infinity, but slower than $\log y$.  
When we use the expression ``for almost all $n\leq x$'', we will mean for all positive integers $n\leq x$ except those in an exceptional set of asymptotic density $O(x/\psi(x))$. 
We will make use of two parameters $Y=Y(x)$ and $Z=Z(x)$ in the course of the proof of Theorem \ref{LPthm} which we now define as
\begin{align*}
Y &= 3cy, \text{ and}\\
Z &= y^2,
\end{align*}
where $c$ is the implicit constant appearing in the Brun-Titchmarsh theorem \eqref{LPeq40} and \eqref{LPeq41}. 
\section{The Proof of Theorem \ref{LPthm}}
We intend to establish an asymptotic formula for 
\begin{align}
\ds \logratio = \sum_{q}( \nu_q(\alf) - \nu_q(\all))\log q,\label{LPeq34}
\end{align}

valid for $n$ in a set of natural density 1.  We will consider the ``large'' $q$ and the ``small'' $q$ separately.  The cut-off for this distinction is the parameter $Y$ giving the cases $q>Y$ and $q\leq Y$, respectively.  \footnote{Don't neglect to deal with the prime q=2}  

For $q > Y$, it will be unusual for $\nu_q(\alf)$ to be strictly larger than $\nu_q(\all)$ and so the contribution in \eqref{LPeq34} from large $q$ will be negligible.  
We bound the sum in \eqref{LPeq34} by the two cases,
\begin{align}
\sum_{q > Y}( \nu_q(\alf) - \nu_q(\all))\log q
	&  \leq \sumst{q > Y \\ \nu_q(\alf) \geq 2} \nu_q(\alf)\log q \nn
	&	\qquad + \sumst{q > Y \\ \nu_q(\alf) = 1} (\nu_q(\alf) - \nu_q(\all))\log q.
\end{align}
We prove the two bounds:
\begin{prop}\label{LPprop1}
For almost all $n\leq x$,
\begin{align*}
\sum_{\substack{q > Y \\ \nu_q(\alf) = 1}} (\nu_q(\alf) - \nu_q(\all))\log q \ll y\psi(x),
\end{align*}
\end{prop}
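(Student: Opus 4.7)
The plan is to establish $\sum_{n\le x}S(n)\ll xy$, where $S(n)$ denotes the sum on the left-hand side of Proposition~\ref{LPprop1}, and then invoke Markov's inequality. Since $\lambda(n)\mid\varphi(n)$ forces $\lambda\lambda(n)\mid\lambda\varphi(n)$, we have $\nu_q(\alf)\ge\nu_q(\all)$ for every prime $q$; combined with the hypothesis $\nu_q(\alf)=1$, each summand is either $0$ or $\log q$, so
\[
S(n)=\sum_{\substack{q>Y\\ q\mid\alf,\ q\nmid\all}}\log q.
\]

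The decisive reduction I would carry out first is: for any odd prime $p$, one has $p\mid\varphi(n)$ if and only if $p\mid\lambda(n)$. This holds because $\varphi(p_i^{a_i})=\lambda(p_i^{a_i})=p_i^{a_i-1}(p_i-1)$ for odd $p_i$, so the set of primes dividing $\varphi(n)=\prod_i\varphi(p_i^{a_i})$ coincides with the set of primes dividing $\lambda(n)=\lcm_i\lambda(p_i^{a_i})$. Consequently, if some $p\in\cP_q$ divides $\varphi(n)$, then $p\mid\lambda(n)$, whence $q\mid p-1\mid\lambda(p^{\nu_p(\lambda(n))})\mid\lambda\lambda(n)=\all$. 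Contrapositively, $q\mid\alf$ combined with $q\nmid\all$ forbids any $p\in\cP_q$ from dividing $\varphi(n)$, so $q\mid\alf$ must arise from the $q$-part of $\varphi(n)$ itself, forcing $q^2\mid\varphi(n)$.

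Having reduced to $S(n)\le\sum_{q>Y,\ q^2\mid\varphi(n)}\log q$, I would exchange orders of summation and, for each $q>Y$, bound $\#\{n\le x:q^2\mid\varphi(n)\}$ by decomposing the event into four primary cases: (i)~$q^3\mid n$, contributing at most $\ll x/q^3$; (ii)~some prime $p\in\cP_{q^2}$ divides $n$, contributing $\ll xy/q^2$ by Brun--Titchmarsh (Lemma~\ref{LPlemma:bt}); (iii)~two distinct primes $r_1,r_2\in\cP_q$ both divide $n$, contributing $\ll x(y/q)^2$ by squaring Brun--Titchmarsh; (iv)~$q^2\mid n$ together with some $r\in\cP_q$ dividing $n$, contributing $\ll xy/q^3$. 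Case~(iii) dominates, giving $\#\{n\le x:q^2\mid\varphi(n)\}\ll xy^2/q^2$.

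To conclude, I would use $\sum_{q>Y}(\log q)/q^2\ll 1/Y$ (partial summation from $\sum_{q\le T}\log q\ll T$) together with $Y\asymp y$ to obtain
\[
\sum_{n\le x}S(n)\ll xy^2\sum_{q>Y}\frac{\log q}{q^2}\ll\frac{xy^2}{Y}\ll xy,
\]
whereupon Markov's inequality yields $\#\{n\le x:S(n)>Cy\psi(x)\}\ll x/\psi(x)$, the desired density statement. The main obstacle is the structural reduction establishing that $q\mid\alf$ with $q\nmid\all$ forces $q^2\mid\varphi(n)$; once that identification of prime divisor sets is in place, the counting is a routine application of Brun--Titchmarsh.
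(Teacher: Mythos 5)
Your proposal is correct and reaches the same bound by the same moment-plus-Markov machinery, but the intermediate reduction is packaged a little differently. The paper reads off from the two tree diagrams that there are exactly two configurations under which $q\mid\alf$ while $q\nmid\all$ (two distinct primes of $\cP_q$ dividing $n$, or $q^2$ together with one prime of $\cP_q$ dividing $n$), and bounds only those two events. You instead make the clean structural observation that $\varphi(n)$ and $\lambda(n)$ have the same set of odd prime divisors, so any $p\in\cP_q$ dividing $\varphi(n)$ would already force $q\mid\all$; hence $q\mid\alf$ with $q\nmid\all$ forces $q^2\mid\varphi(n)$, and you then re-expand $q^2\mid\varphi(n)$ into four cases. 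Two of those (your (iii) and (iv)) are precisely the paper's two configurations; your (i) $q^3\mid n$ and (ii) $p\in\cP_{q^2}$, $p\mid n$ are logically superfluous under the hypothesis $q\nmid\all$ (each already implies $q\mid\all$) but harmless as an upper bound, contributing $x/q^3$ and $xy/q^2$, both dominated by $xy^2/q^2$. Your ``same odd prime divisors'' lemma is a tidy way to reach the reduction without drawing the full trees, and the final estimate $\sum_{n\le x}S(n)\ll xy^2/Y\ll xy$ (hence $S(n)\ll y\psi(x)$ off a set of size $O(x/\psi(x))$) matches the paper exactly.
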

and
\begin{prop}\label{LPprop2}
For almost all $n\leq x$,
 \begin{align}
\sum_{\substack{q > Y \\ \nu_q(\alf) \geq 2}} \nu_q(\alf)\log q \ll y\psi(x).
\end{align}
\end{prop}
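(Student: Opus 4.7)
The plan is to prove an average bound $\mathbb{E}_n[\mathrm{LHS}] \ll y$ over $n \leq x$ and then invoke Markov's inequality, which yields an exceptional set of cardinality $O(x/\psi(x))$, matching the paper's convention for ``almost all $n \leq x$''. First, I would rewrite the sum via level sets,
\[
\nu_q(\alf) \cdot \mathbf{1}[\nu_q(\alf) \geq 2] = 2 \cdot \mathbf{1}[\nu_q(\alf) \geq 2] + \sum_{k \geq 3} \mathbf{1}[\nu_q(\alf) \geq k],
\]
reducing the problem to bounding $\sum_{q > Y,\, q^k \mid \alf} \log q$ for each $k \geq 2$. The key structural observation, applying $\lambda(m) = \lcm_{p^a \| m} \lambda(p^a)$ to $m = \phi(n)$ together with formula \eqref{LPlambda1} (and noting $q > Y$ forces $q$ odd), is that $\nu_q(\lambda(p^a)) = a-1$ when $p = q$ and $\nu_q(\lambda(p^a)) = \nu_q(p-1)$ when $p \neq q$. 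Consequently $q^k \mid \alf$ iff either (i) $\nu_q(\phi(n)) \geq k+1$, or (ii) some prime $p \mid \phi(n)$ satisfies $p \equiv 1 \pmod{q^k}$.

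Next, I would bound the probability of each event via Brun-Titchmarsh. Using $\phi(n) = \prod_{r \mid n} r^{\nu_r(n)-1}(r-1)$, event (ii) in its dominant subcase demands primes $r \mid n$ and $p \mid r-1$ with $p \equiv 1 \pmod{q^k}$, so two applications of Lemma \ref{LPlemma:bt} yield
\[
\#\{n \leq x : \text{(ii)}\} \leq \sum_{\substack{p \leq x \\ p \equiv 1 \pmod{q^k}}} \sum_{\substack{r \leq x \\ r \equiv 1 \pmod p}} \frac{x}{r} \ll \frac{x y^2}{q^k}.
\]
For event (i), after discarding the rare subcase $q \mid n$, the condition $\nu_q(\phi(n)) \geq k+1$ reduces to $\sum_{r \mid n} \nu_q(r-1) \geq k+1$; union-bounding over compositions $(b_1,\ldots,b_j)$ of $k+1$ with $b_i \geq 1$ and applying Brun-Titchmarsh in each factor produces $\ll x y (1+y)^k / q^{k+1} \ll x y^{k+1}/q^{k+1}$. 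The degenerate subcases (including $p = r$ with $\nu_r(n) \geq 2$, and $q^{k+2} \mid n$) contribute at most $O(x/q^k)$ and are absorbed.

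Then I would sum against $\log q$. By partial summation from Lemma \ref{LPlemma:primesums}, $\sum_{q > Y} \log q / q^k \ll Y^{-(k-1)}$ for $k \geq 2$, and with $Y = 3cy$ this gives
\[
\mathbb{E}_n[\mathrm{LHS}] \ll \sum_{k \geq 2} \left(\frac{y^2}{Y^{k-1}} + \frac{y^{k+1}}{Y^k}\right) = \sum_{k \geq 2} \left(\frac{y^{3-k}}{(3c)^{k-1}} + \frac{y}{(3c)^k}\right) \ll y,
\]
the series converging geometrically because $1/(3c) < 1$ by the defining choice of $Y$. Markov's inequality applied to $\mathrm{LHS}/(y\psi(x))$ then shows the bound $\mathrm{LHS} \ll y\psi(x)$ fails for at most $O(x/\psi(x))$ integers $n \leq x$, completing the proof.

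The main obstacle is the partition bookkeeping for event (i): one must sum the contribution $\ll y^j/q^{k+1}$ over all $\binom{k}{j-1}$ compositions of $k+1$ into $j$ positive parts, and the resulting combinatorial factor $(1+y)^k$ must be absorbed by the geometric decay built into $y/Y = 1/(3c)$. Once this is in hand, the remaining estimates are routine applications of Brun-Titchmarsh together with the prime sums in Lemma \ref{LPlemma:primesums}.
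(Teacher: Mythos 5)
Your proposal is correct and shares the same high-level skeleton as the paper: reduce via level sets to counting $n$ with $q^k \mid \alf$, split this event into (i) $\nu_q(\phi(n)) \geq k+1$ and (ii) some $p \in P_{q^k}$ divides $\phi(n)$, bound each by Brun--Titchmarsh, sum against $\log q$, and conclude by Markov. The genuine difference is how you handle case (i). The paper first excises the set $S = \bigcup_{q>Y}\{n \leq x : q^2\mid n \text{ or } p\mid n \text{ for some } p\in P_{q^2}\}$, which has cardinality $O(xy/(Y\log Y)) = O(x/\psi(x))$; for $n\notin S$, each prime $r\mid n$ in $P_q$ contributes exactly one factor of $q$ to $\phi(n)$ and $q$-powers in $n$ contribute nothing, so $q^{\alpha+1}\mid\phi(n)$ forces $\alpha+1$ \emph{distinct} primes of $P_q$ to divide $n$. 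Union-bounding over such tuples (dropping distinctness, dividing by $(\alpha+1)!$) gives $\ll x(cy)^{\alpha+1}/((\alpha+1)!\,q^{\alpha+1})$, with trivially convergent $\alpha$-sum. You instead keep $n$ unrestricted (apart from discarding $q\mid n$) and union-bound over compositions $(b_1,\dots,b_j)$ of $k+1$, picking up the combinatorial factor $(1+cy)^k$ in place of the $1/(k+1)!$. Both produce geometrically convergent series once $q>Y=3cy$; the paper's set-removal buys cleaner bookkeeping and a much smaller combinatorial factor, while yours avoids the extra step of estimating $|S|$ but must lean on $Y=3cy$ for convergence of the resulting geometric series.

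One small imprecision: the discarded subcases with $q^m\| n$ for $2\leq m\leq k+1$ contribute $\ll x(cy)^{k+2-m}/q^{k+2}$, which after summing over $m$ is $\ll x(cy)^{k}/q^{k+2}$. This is indeed dominated by the main term $x(cy)^{k+1}/q^{k+1}$, but it is \emph{not} $O(x/q^k)$ as stated (for $q$ near $Y$ and moderately large $k$, $(cy)^k/q^2$ is unbounded). The conclusion stands; the bound you quoted for the degenerate cases is just off.
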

Combining Propositions \ref{LPprop1} and \ref{LPprop2} gives the upper bound we seek:
\begin{prop}\label{LPprop3}
For almost all $n\leq x$,
\begin{align}
\sum_{q > Y}( \nu_q(\alf) - \nu_q(\all))\log q \ll y\psi(x).
\end{align}
\footnote{Personal note: Note this requires $Y>2cy$ where $c$ is the constant in the Brun-Titchmarsh theorem}
\end{prop}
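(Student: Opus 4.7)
The plan is to deduce Proposition \ref{LPprop3} directly from Propositions \ref{LPprop1} and \ref{LPprop2} by a short splitting argument, so the work here is essentially bookkeeping rather than analytic.

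First I would record the non-negativity of every summand. Since $\lambda(n) \mid \vphi(n)$ for every positive integer $n$, and since $\lambda$ is divisibility preserving (if $a \mid b$ then $\lambda(a) \mid \lambda(b)$, which is immediate from \eqref{LPlambda1} and \eqref{LPlambda2}), we obtain $\lambda\lambda(n) \mid \lambda\vphi(n)$. Consequently $\nu_q(\alf) - \nu_q(\all) \geq 0$ for every prime $q$, and in particular the terms with $\nu_q(\alf) = 0$ contribute nothing.

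Next, I would partition the remaining range of $q$ according to whether $\nu_q(\alf) = 1$ or $\nu_q(\alf) \geq 2$, giving
\begin{align*}
\sum_{q > Y}( \nu_q(\alf) - \nu_q(\all))\log q
&= \sumst{q>Y \\ \nu_q(\alf)=1} (\nu_q(\alf)-\nu_q(\all))\log q \\
&\quad + \sumst{q>Y \\ \nu_q(\alf)\geq 2} (\nu_q(\alf)-\nu_q(\all))\log q.
\end{align*}
The first sum is exactly the quantity bounded in Proposition \ref{LPprop1}. For the second sum, I would apply the trivial inequality $\nu_q(\alf) - \nu_q(\all) \leq \nu_q(\alf)$ (justified by non-negativity of $\nu_q(\all)$) to dominate it by $\sum_{q>Y,\, \nu_q(\alf)\geq 2} \nu_q(\alf) \log q$, which is controlled by Proposition \ref{LPprop2}.

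Finally, let $E_1$ and $E_2$ denote the exceptional sets of $n \leq x$ appearing in Propositions \ref{LPprop1} and \ref{LPprop2} respectively, each of cardinality $O(x/\psi(x))$. For $n \leq x$ outside $E_1 \cup E_2$ both bounds hold simultaneously, and adding them gives the desired $\ll y\psi(x)$. Since $|E_1 \cup E_2| \leq |E_1| + |E_2| = O(x/\psi(x))$, the conclusion holds for almost all $n \leq x$ in the sense agreed in Section 2. There is no genuine obstacle at this stage: the entire analytic content sits inside Propositions \ref{LPprop1} and \ref{LPprop2}, and the only point requiring a small check is the divisibility $\lambda\lambda(n) \mid \lambda\vphi(n)$, which ensures that the sum makes sense without absolute values and that the decomposition loses nothing.
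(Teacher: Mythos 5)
Your proposal is correct and follows essentially the same route as the paper: split the sum over $q>Y$ according to whether $\nu_q(\alf)=1$ or $\nu_q(\alf)\geq 2$, bound the second piece by dropping $\nu_q(\all)$, apply Propositions \ref{LPprop1} and \ref{LPprop2}, and take a union bound on the two exceptional sets. Your preliminary observation that $\lambda\lambda(n)\mid\lambda\vphi(n)$ (hence $\nu_q(\alf)\geq\nu_q(\all)$ for all $q$) is a nice clarifying remark, but it is not strictly needed for the upper bound, since the terms with $\nu_q(\alf)=0$ are nonpositive and can simply be discarded.
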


We now consider with those primes $q\leq Y.$  It will turn out that the main term comes from the quantity $\sum_{q \leq Y} \nu_q(\alf)$ with the sum $\sum_{q \leq Y}\nu_q(\all)$ sufficiently small.
\begin{prop}\label{LPprop4}
For almost all $n\leq x$,
\begin{align*}
\sum_{q \leq Y} \nu_q(\all) \log q \ll y\psi(x).
\end{align*}
\end{prop}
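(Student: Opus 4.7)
The plan is to estimate the first moment
$$S(x) := \sum_{n \leq x} \sum_{q \leq Y} \nu_q(\lambda\lambda(n)) \log q$$
by $O(xy)$ and then conclude by Markov's inequality. Interchanging summations via $\nu_q(m) = |\{k \geq 1 : q^k \mid m\}|$, I would write $S(x) = \sum_{q \leq Y} \log q \sum_{k \geq 1} N(q,k)$, where $N(q,k) := |\{n \leq x : q^k \mid \lambda\lambda(n)\}|$, reducing everything to bounding these counts.

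To bound $N(q,k)$, I would combine two estimates. The trivial bound $N(q,k) \leq x$ is useful when $q^k$ is small. For larger $q^k$, note that by \eqref{LPlambda1}--\eqref{LPlambda2}, the divisibility $q^k \mid \lambda\lambda(n)$ forces the existence of some prime $p$ with $p \mid \lambda(n)$ and $q^k \mid \lambda(p^{\nu_p(\lambda(n))})$ --- either $p = q$ with $q^{k+1} \mid \lambda(n)$, or $p \neq q$ with $p \equiv 1 \pmod{q^k}$. In either subcase, apart from the rare events $p^2 \mid n$ or $q^{k+2}\mid n$, the integer $n$ must have a prime divisor $P \equiv 1 \pmod{p}$ for some such $p$. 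A cascaded application of the Brun-Titchmarsh inequality (Lemma \ref{LPlemma:bt}) --- first to show $|\{n \leq x : p \mid \lambda(n)\}| \ll xy/\phi(p)$, then summing $1/\phi(p)$ over $p \equiv 1 \pmod{q^k}$ --- would yield $N(q,k) \ll xy^2/\phi(q^k)$.

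Setting the cutoff $k_0 = k_0(q) := \lceil 2 \log y/\log q \rceil$ so that $q^{k_0} \geq y^2$, and using the trivial bound for $k \leq k_0$ together with the cascade bound for $k > k_0$, the geometric tail $xy^2 \sum_{k > k_0} \phi(q^k)^{-1}$ is $\ll x$, so
$$\sum_{k \geq 1} N(q,k) \ll x\, k_0(q) + x \ll \frac{x \log y}{\log q} + x.$$
Multiplying by $\log q$ and summing over $q \leq Y$ then gives $S(x) \ll x\log y \cdot \pi(Y) + x\,\theta(Y) \ll xy$, since $\pi(Y)\log y$ and $\theta(Y)$ are each $\asymp Y \asymp y$. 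Markov's inequality applied to the non-negative quantity $A(n) := \sum_{q \leq Y} \nu_q(\lambda\lambda(n)) \log q$ then bounds the exceptional set by $|\{n \leq x : A(n) > y\psi(x)\}| \leq S(x)/(y\psi(x)) \ll x/\psi(x)$, matching the definition of ``almost all''. The main obstacle I anticipate is the bookkeeping in the double Brun-Titchmarsh cascade, including the minor adjustments for $q = 2$ from \eqref{LPlambda1} and for rare exceptional divisibility events such as $p^2 \mid n$, each of which should contribute only lower-order terms.
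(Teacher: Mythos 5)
Your argument is correct and follows essentially the same route as the paper: split at $q^k\asymp y^2$ (the paper's parameter $Z$), use the trivial bound for small prime powers, bound the large prime powers on average via the count $|\{n\leq x: q^k\mid\lambda\lambda(n)\}|\ll xy^2/q^k$, and finish with Markov's inequality. The only difference is that you sketch a re-derivation of that count by a double Brun--Titchmarsh cascade, whereas the paper simply quotes it as Lemma~\ref{LPlem2} (Lemma~11 of \cite{MP}); the derivation you outline is precisely how that lemma is proved, so nothing substantive is missing.
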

We are left with the final piece of establishing the asymptotic behavior of  $\sum_{q \leq Y} \nu_q(\alf)$.  This will involve a case-by-case analysis of the various ways that $q$ can divide $\alf$ with multiplicity.  Two functions $g(n)$ and $h(n)$ arise from this analysis:
\begin{align*}
g(n) 	& = \sum_{q \leq Y}\sum_{\substack{\alpha \geq 1 \\ q^{\alpha+1} | \phi(n)}} \log q,\\
h(n) & = \sum_{q \leq Y} \sum_{\substack{\alpha \geq 1 \\ \omega(n, Q_{q^{\alpha}}) > 0}} \log q,\text{~and}\\
Q_{q^{\alpha}} &= \{ r\leq x : \exists p \in P_{q^{\alpha}}~\text{st}~ r\in P_p\}.
\end{align*}
We will show that $g(n)$ is a good approximation to $\sum_{q \leq Y} \nu_q(\alf)$.  To deal with $g(n)$, we will choose a suitably close additive function to approximate $g(n)$ and employ the Tur\'{a}n-Kubilius inequality to find the normal order of $g(n)$.
\begin{prop}\label{LPprop6}
For almost all $n\leq x,$
\begin{align*}
g(n) = y \log y + O(y).
\end{align*}
\end{prop}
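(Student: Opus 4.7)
The plan is to reduce $g(n)$ to a strongly additive function of $n$ up to controlled errors and apply the Tur\'an-Kubilius inequality. The inner sum defining $g(n)$ counts $\alpha\geq 1$ with $q^{\alpha+1}\mid\phi(n)$, so
\[
g(n) = \sum_{q\leq Y}(\nu_q(\phi(n))-1)_+\log q = \sum_{q\leq Y}\nu_q(\phi(n))\log q - \sum_{\substack{q\leq Y \\ q\mid\phi(n)}}\log q,
\]
and the subtracted piece is $O(Y)=O(y)$ by the prime number theorem. Using the factorization $\phi(n) = \prod_{p\mid n}p^{\nu_p(n)-1}(p-1)$ together with $q\nmid q-1$, one expands
\[
\nu_q(\phi(n)) = (\nu_q(n)-1)_+ + \sum_{\substack{p\mid n \\ p\neq q}}\nu_q(p-1).
\]
The contribution $\sum_{q\leq Y}(\nu_q(n)-1)_+\log q$ has mean $O(\log y)$ over $n\leq x$ and is therefore $O(y)$ for almost all $n$ by Markov. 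It suffices to prove $H(n) = y\log y + O(y)$ for almost all $n\leq x$, where
\[
H(n) := \sum_{p\mid n}h(p), \qquad h(p) := \sum_{q\leq Y}\nu_q(p-1)\log q
\]
is strongly additive.

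I would then apply the Tur\'an-Kubilius inequality to $H$. For the mean,
\[
A(x) := \sum_{p\leq x}\frac{h(p)}{p} = \sum_{q\leq Y}\log q\sum_{\alpha\geq 1}\sum_{\substack{p\leq x \\ p\equiv 1\pmod{q^\alpha}}}\frac{1}{p}.
\]
Mertens' theorem for primes in arithmetic progressions---Siegel-Walfisz for $q^\alpha\leq(\log x)^A$ together with the Brun-Titchmarsh bound of Lemma \ref{LPlemma:bt} for the tail---evaluates the inner double sum as $y\cdot q/(q-1)^2 + O(q/(q-1)^2)$. Combined with the Mertens estimate $\sum_{q\leq Y}(\log q)/(q-1) = \log Y + O(1) = \log y + O(1)$ and convergence of $\sum_q(\log q)/(q-1)^2$, this yields $A(x) = y\log y + O(y)$. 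For the variance proxy $B(x)^2 := \sum_{p\leq x}h(p)^2/p$, I would expand $h(p)^2$ as a double sum in $q_1,q_2\leq Y$ and apply Brun-Titchmarsh to primes $\equiv 1\pmod{q_1^{\alpha_1}q_2^{\alpha_2}}$ (with the diagonal case $q_1=q_2=q$ summing over $\gamma=\max(\alpha_1,\alpha_2)$), obtaining $B(x)^2\ll y(\log y)^2$. Tur\'an-Kubilius then gives $\sum_{n\leq x}(H(n)-A(x))^2 \ll xy(\log y)^2$, and Chebyshev's inequality shows that outside a set of cardinality $O(x/\psi(x))$ one has $|H(n)-A(x)| \ll (\log y)\sqrt{\psi(x)y}$, which is $O(y)$ since $\psi(x)=o(\log y)$ and $(\log y)^3 = o(y)$.

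The main obstacle is the variance bound $B(x)^2\ll y(\log y)^2$. The trivial bound $h(p)\leq\log(p-1)\leq\log x$ combined with $A(x)\ll y\log y$ would yield only $B(x)^2\ll y(\log y)(\log x)$, which is too large by a factor of $\log x$ for Tur\'an-Kubilius to produce an error of size $O(y)$. The needed saving exploits the fact that $h(p)$ is typically of size $\log y$, not $\log x$; extracting this requires a term-by-term analysis of the $(q_1,q_2)$ double sum with careful Brun-Titchmarsh bounds applied to composite moduli.
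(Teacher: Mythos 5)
Your proposal matches the paper's proof in essentially every particular: you reduce $g(n)$ to the strongly additive function $g_0(n)=\sum_{p\mid n}\sum_{q\leq Y}\nu_q(p-1)\log q$ (absorbing the $\sum_{q\leq Y}\nu_q(n)\log q$ and $O(Y)$ corrections into the error), compute the first moment $A(x)=y\log y+O(y)$ by Mertens for arithmetic progressions plus Brun--Titchmarsh for the tail, bound the second moment by $y(\log y)^2$ via the diagonal/off-diagonal split with Brun--Titchmarsh on composite moduli, and finish with Tur\'an--Kubilius and Chebyshev. The worry you raise in the final paragraph is not a gap---the $(q_1,q_2)$ double-sum analysis you sketch is exactly how the paper (and you) already get the correct variance bound, so the ``trivial bound'' never enters.
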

\begin{prop}\label{LPprop7}
For almost all $n\leq x$,
\begin{align*}
h(n) \ll \psi(x)y.
\end{align*}
\end{prop}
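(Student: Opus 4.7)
The plan is to bound the first moment $\sum_{n \leq x} h(n)$ and then apply Markov's inequality. Since $Q_{q^{\alpha+1}} \subseteq Q_{q^\alpha}$, for each prime $q$ the set of $\alpha \geq 1$ with $\omega(n, Q_{q^\alpha}) > 0$ is either empty or an initial segment $\{1, 2, \ldots, A(q)\}$, where $A(q) = A(q, n)$ denotes the largest such $\alpha$ (and $0$ if none exists). Interchanging the order of summation,
\[
\sum_{n \leq x} h(n) = \sum_{q \leq Y} \log q \sum_{\alpha \geq 1} \#\{n \leq x : \omega(n, Q_{q^\alpha}) > 0\}.
\]

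For each pair $(q, \alpha)$, a union bound over the primes $r \in Q_{q^\alpha}$ gives $\#\{n \leq x : \omega(n, Q_{q^\alpha}) > 0\} \leq x \sum_{r \in Q_{q^\alpha}, r \leq x} 1/r$. Unwinding the definition of $Q_{q^\alpha}$ and applying Lemma \ref{LPlemma:bt} twice (once to count primes $r \leq x$ in progressions $1 \pmod{p}$, once to count primes $p \leq x$ in the progression $1 \pmod{q^\alpha}$), I would obtain
\[
\sum_{\substack{r \in Q_{q^\alpha} \\ r \leq x}} \frac{1}{r} \leq \sum_{\substack{p \leq x \\ p \in P_{q^\alpha}}} \sum_{\substack{r \leq x \\ r \in P_p}} \frac{1}{r} \ll \sum_{\substack{p \leq x \\ p \in P_{q^\alpha}}} \frac{y}{p - 1} \ll \frac{y^2}{\phi(q^\alpha)}.
\]
Combined with the trivial bound of $x$, this yields $\#\{n \leq x : \omega(n, Q_{q^\alpha}) > 0\} \ll x \min\bigl(1, y^2/\phi(q^\alpha)\bigr)$.

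Next I would sum over $\alpha$ by splitting at the threshold $\alpha^* = \alpha^*(q)$ at which $\phi(q^{\alpha^*}) \asymp y^2$; this satisfies $\alpha^*(q) \ll 1 + \log y / \log q$. For $\alpha \leq \alpha^*$ use the trivial bound, contributing $\ll x\alpha^*(q)$; for $\alpha > \alpha^*$ the remaining terms form a convergent geometric series in $1/q^\alpha$ contributing $O(x)$. Hence $\sum_\alpha \#\{n \leq x : \omega(n, Q_{q^\alpha}) > 0\} \ll x(1 + \log y/\log q)$. Summing this over $q \leq Y$, and using $Y = 3cy$ so that $\log Y = \log y + O(1)$,
\[
\sum_{n \leq x} h(n) \ll x \sum_{q \leq Y} \log q + x \log y \cdot \pi(Y) \ll x\left(Y + \frac{Y \log y}{\log Y}\right) \ll xy.
\]
Markov's inequality then gives $\#\{n \leq x : h(n) > C \psi(x) y\} \ll x/\psi(x)$, matching the prescribed density of the exceptional set.

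The main obstacle is organising the double sum over $(q,\alpha)$ economically: neither the trivial bound $x$ nor the double Brun-Titchmarsh bound $xy^2/\phi(q^\alpha)$ alone suffices (the latter, summed over all $(q,\alpha)$, would give $\ll xy^2 \log y$, two logarithms too large for Markov to recover the target $\psi(x) y$). The split at $\alpha^*(q)$ where the two bounds balance is what reduces the total moment to $O(xy)$, and the choice $Y = 3cy$ ensures that the resulting sums over $q$ contribute only a single factor of $y$.
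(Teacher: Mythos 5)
Your proposal is correct and follows essentially the same route as the paper: bound the first moment $\sum_{n\le x}h(n)$ by splitting the $(q,\alpha)$ range at the point where the trivial bound $x$ and the double Brun--Titchmarsh bound $xy^2/\phi(q^\alpha)$ balance (the paper uses the fixed cutoff $q^\alpha\le Z=y^2$, which is the same threshold), obtaining $O(xy)$, and then apply Markov's inequality. The only cosmetic difference is your per-prime threshold $\alpha^*(q)$ versus the paper's global parameter $Z$; the estimates and conclusion are identical.
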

We will combine these propositions to show
\begin{prop}\label{LPprop8}
\begin{align*}
\sum_{q\leq Y}( \nu_q(\alf) - \nu_q(\all))\log q = y \log y + O(\psi(x) y).
\end{align*}
\end{prop}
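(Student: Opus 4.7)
The plan is to estimate $\sum_{q\le Y}\nu_q(\alf)\log q$ and then subtract the bound on $\sum_{q\le Y}\nu_q(\all)\log q$ furnished by Proposition \ref{LPprop4}. Write $\nu_q(\alf)=\sum_{\alpha\ge 1}[q^\alpha\mid\alf]$ and decompose this indicator using $\alf=\lcm_{r^b\|\vphi(n)}\lambda(r^b)$. Since $\lambda(r^b)=r^{b-1}(r-1)$ for $r$ odd, the condition $q^\alpha\mid\alf$ holds iff one of the following occurs: (I) $q^{\alpha+1}\mid\vphi(n)$, or (II) there exists a prime $r\ne q$ with $r\mid\vphi(n)$ and $r\in\pqa$. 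The prime $q=2$ uses the formula $\lambda(2^b)=2^{b-2}$ for $b\ge 3$, which shifts the exponent in (I) by one; this costs at most $O(\log Y)=O(\log_3 x)$ in total and is easily absorbed into $O(y\psi(x))$.

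The case-(I) contribution, summed against $\log q$ over $q\le Y$, is exactly $g(n)$. For case (II), the prime $r\mid\vphi(n)$ arises either (a) because $r\mid p-1$ for some prime $p\mid n$, or (b) because $r^2\mid n$. In sub-case (a), $p\in P_r$ together with $r\in\pqa$ forces $p\in\qqa$ directly from the definition of $Q_{q^\alpha}$; since $p\mid n$, we conclude $\omega(n,\qqa)>0$, and the total contribution against $\log q$ is therefore bounded by $h(n)$. In sub-case (b) the error is
\begin{align*}
\sum_{q\le Y}\sum_{\alpha\ge 1}[\exists\,r\ne q,\ r^2\mid n,\ r\in\pqa]\log q \le \sum_{r^2\mid n}\sum_{q\mid r-1}\nu_q(r-1)\log q \le \sum_{r^2\mid n}\log r,
\end{align*}
and the first-moment estimate $\sum_{n\le x}\sum_{r^2\mid n}\log r\le x\sum_r(\log r)/r^2\ll x$ combined with Markov's inequality shows $\sum_{r^2\mid n}\log r=O(\psi(x))$ for $n$ outside an exceptional set of cardinality $O(x/\psi(x))$.

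Applying $[A\vee B]\le[A]+[B]$ to cases (I) and (II) yields the upper bound $\sum_{q\le Y}\nu_q(\alf)\log q\le g(n)+h(n)+O(\psi(x))$, while the trivial lower bound $\sum_{q\le Y}\nu_q(\alf)\log q\ge g(n)$ follows because case (I) alone is already sufficient for $q^\alpha\mid\alf$. Combining with Propositions \ref{LPprop6} and \ref{LPprop7} gives $\sum_{q\le Y}\nu_q(\alf)\log q=y\log y+O(\psi(x)y)$, and subtracting the estimate of Proposition \ref{LPprop4} completes the proof. The main obstacle, in my view, is the careful bookkeeping needed to translate sub-case (a) into the $Q_{q^\alpha}$-condition defining $h(n)$ and to certify that the squared-prime sub-case (b) together with the $q=2$ anomaly contribute only $O(\psi(x)y)$; once those reductions are in place the result is an assembly of the preceding propositions.
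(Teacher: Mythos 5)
Your proof is correct and follows essentially the same route as the paper: both decompose $q^\alpha\mid\alf$ into the case $q^{\alpha+1}\mid\phi(n)$ (yielding $g(n)$), the case $\omega(n,Q_{q^\alpha})>0$ (bounded by $h(n)$), and the residual case $p^2\mid n$ with $p\in P_{q^\alpha}$ (an error that is $O(1)$ on average, hence $O(\psi(x))$ for almost all $n$), and then apply Propositions \ref{LPprop6}, \ref{LPprop7}, and subtract Proposition \ref{LPprop4}. You are somewhat more explicit than the paper about the $q=2$ anomaly in $\lambda(2^b)$, which the paper acknowledges only in a marginal note, but this does not change the substance of the argument.
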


Summing the results from Propositions \ref{LPprop3} and \ref{LPprop8} gives 
\begin{align*}
	\sum_{q}( \nu_q(\alf) - \nu_q(\all))\log q = y \log y + O(\psi(x) y),
\end{align*}
which proves Theorem 1.
In the following two sections, we will establish all of the propositions of this section except proposition \ref{LPprop3} which we have established.
%
%
\section{Large Primes $q > Y$}
In this section we prove Propositions \ref{LPprop1} and \ref{LPprop2}.  In order to proceed, we must first understand the different ways in which prime powers can divide $\all$ and $\alf$.  We assume $Y\geq 2$ so all primes $q$ under consideration are odd. 

From the definition $\lambda(n)$ (see \eqref{LPlambda1} and \eqref{LPlambda2})\footnote{Question: Should I insert those equations here in the format ``Recall \dots''?}, one sees that $\all$ has $q$ as a prime divisor if $q^2$ divides $\lambda(n)$ or if $n$ is divisible by some prime in $\cP_q.$  We emphasize that these conditions are not exclusive.  We may expand these conditions in turn.  If $q^2|\lambda(n)$, then the higher power $q^3$ divides $n$, or a prime in $\cP_{q^2}$ divides $n$; while if some prime $p\in \cP_q$ divides $\lambda(n)$, then $p^2|n$, or $(n, \cP_p)>1$.
We summarize these cases in the tree diagram below.
\\
\newcommand{\st}{\text{ st }}
\begin{tikzpicture}
\path (0,2.25) node (v0) {$q|\all$};
\path (2.5, 3.75) node (v1) {$q^2 | \lambda(n)$};
\path (2.5,.75) node (v2) {$\exists p \in P_{q} \st p | \lambda(n)$};
\path (7, 4.5) node (v3) {$q^3 | n$};
\path (7, 3) node (v4) {$\exists p \in P_{q^2} \st p|n$};
\path (7, 1.5) node (v5) {$\exists p \in P_{q} \st p^2 | n$};
\path (7, 0) node (v6) {$\exists p \in P_{q}\st r \in P_p, r|n$};
\draw (v0) -- (v1) -- (v3);  
\draw (v0) -- (v2) -- (v5);
\draw (v1) -- (v4);
\draw (v2) -- (v6);
\end{tikzpicture}
%
%

We proceed with a similar analysis on the ways that $q$ can be a divisor of $\alf.$  We saw that either $q^2$ or some prime in $\cP_q$ must divide the argument $\phi(n)$ of $\alf$.  If two copies of $q$ divide $\phi(n)$, then their presence can come from the cube $q^3$ dividing $n$, two distinct primes dividing $n$ with each prime in $\cP_q$ contributing one factor of $q$, both $q^2|n$ and a prime $p\in\cP_q$ dividing $n$, or a single prime in $\cP_{q^2}$ dividing $n$.  In the other case, if a prime $p\in \cP_q$ divides $\phi(n)$,  then $p^2|n$ or $(n, \cP_p) > 1$.

\begin{tikzpicture}
\path (0,3.75) node (v0) {$q|\alf$};
\path (2.5,6.75) node (v1) {$q^2 | \phi(n)$};
\path (2.5,.75) node (v2) {$\exists p\in P_{q} \st p|\phi(n)$};
\path (8,9) node (v3) {$q^3 | n$};
\path (8,7.5) node (v4) {$\exists p_1,p_2\in P_q \st p_1\neq p_2, p_1 p_2 |n$};
\path (8,6) node (v5) {$q^2 | n, \exists p\in P_q \st p|n$};
\path (8,4.5) node (v6) {$\exists p \in P_{q^2} \st p|n$};
\path (8,1.5) node (v7) {$\exists p\in P_{q}\st p^2 |n$};
\path (8,0) node (v8) {$\exists p\in P_{q}\st r \in P_p, r|n$};
\draw (v0)--(v1)--(v3);
\draw (v0)--(v2)--(v7);
\draw (v1)--(v4);
\draw (v1)--(v5);
\draw (v1)--(v6);
\draw (v2)--(v8);
\end{tikzpicture}

%
%

Now we turn to the proof of Proposition \ref{LPprop1}.

\begin{proof}[Proof of Proposition \ref{LPprop1}]
One sees from the above analysis that $q|\alf$ whenever $q|\all$, so the only way $(\nu_q(\alf) - \nu_q(\all))$ can be nonzero is if $q|\alf$ and $q\nmid\all$.
Moreover, there are only two ways that $q$ can divide $\alf$ but not $\all$; namely, two distinct primes $p_1,p_2 \in P_q$ could divide $n$, or both $q^2$ and a single prime $p\in P_q$ could divide $n$.  Thus
\begin{align*}
\frac1x\sum_{n\leq x}\sum_{\substack{q > Y \\ \nu_q(\alf) = 1}} (\nu_q(\alf) - \nu_q(\all))\log q 
	& \leq \frac1x \sum_{q > Y}\sum_{\substack{p_1,p_2 \in P_q \\ p_1 p_2 | n \\ n \leq x}} \log q  + \frac1x \sum_{q > Y} \sum_{\substack{n \leq x \\ p \in P_q \\ pq^2 | n}} \log q \\
	& \ll \frac1x \sum_{q > Y} \bigg(\frac{xy^2}{q^2} + \frac{xy}{q^3} \bigg)\log q\\
	& \ll y^2/Y,
\end{align*}
where we used Lemmata \ref{LPlemma:bt} and \ref{LPlemma:primesums}.  Plugging in $Y=3cy$ the upper bound is $\ll y$.  We deduce that for almost all $n\leq x$,
\begin{align*}
\sum_{\substack{q > Y \\ \nu_q(\alf) = 1}} (\nu_q(\alf) - \nu_q(\all))\log q \ll y\psi(x).
\end{align*}
\end{proof}
Now we would like to show that
 \begin{align}
\sum_{\substack{q > Y \\ \nu_q(\alf) \geq 2}} \nu_q(\alf)\log q \ll y^2\psi(x)/Y \label{LPeq14}
\end{align}
holds normally.  
\begin{proof}[Proof of Proposition \ref{LPprop2}]
Define $S_q = S_q(x) = \{ n\leq x : q^2 | n \text{ or } p|n \text{ for some } p\in P_{q^2} \}$ and $S = \cup_{q > Y} S_q.$  A simple estimate shows that the cardinality of $S$ is $O(xy/(Y\log Y))$.  We will choose $Y$ to be of asymptotic order $\gg y$, thus the number of elements in $S$ is $O(x/\psi(x))$.  As we are interested in a normality result, we may safely ignore the positive integers in $S$.  Consequently, to establish \eqref{LPeq14} for almost all $n$, it suffices to establish the mean value estimate
\begin{align}
\frac 1x \sum_{\substack{n\leq x \\ n \not\in S}} \sum_{\substack{q > Y \\ \nu_q(\alf) \geq 2}} \nu_q(\alf)\log q \ll y^2/Y.
\end{align}
To this end we write
\begin{align*}
\frac 1x \sum_{\substack{n\leq x\\n\not\in S}} \sum_{\substack{q > Y \\ \nu_q(\alf) \geq 2}} \nu_q(\alf)\log q &
	\leq \frac2x \sum_{\substack{q > Y \\ \alpha \geq 2}} \sum_{\substack{n\leq x \\n\not\in S\\ q^{\alpha} | \alf}}\log q \\
	&\leq \frac2x \sum_{\substack{q > Y \\ \alpha \geq 2}} \bigg(
	\sum_{\substack{n \leq x\\ p \in P_{q^{\alpha}} \\ p | \phi(n)}}+ \sum_{\substack{n \leq x \\ n\not\in S\\q^{\alpha+1} | \phi(n)}}\bigg) \log q.
\end{align*}
In order for the prime $p$ to be a divisor of $\phi(n)$, one of: $p^2$ divides $n$, or $r\in P_p$ and $r$ divides $n$ for some prime $r$ must occur.  Thus,
\begin{align}
\sum_{\substack{n \leq x \\ p \in P_{q^{\alpha}} \\ p | \phi(n)}} 1 = \sum_{\substack{p \leq x \\ p \in P_{q^{\alpha}}}} \sum_{\substack{n\leq x \\ p | \phi(n)}} 1 \ll 
\sum_{\substack{p \leq x \\ p \in P_{q^{\alpha}}}} \bigg(\frac x {p^2} +  \sum_{\substack{r\leq x \\ r\in P_p}}  \frac{x}{r}\bigg)  \ll \sum_{p > q^{\alpha}} \frac{x}{p^2} + \sum_{\substack{p \leq x\\ p\in P_{q^{\alpha}}}} \frac{xy}{p} \ll \frac{x}{\alpha q^{\alpha} \log q} + \frac{x y^2}{q^{\alpha}}.
\end{align}

Summing over $q>Y$ and $\alpha \geq 2$ and weighting by $\log q$ we have the asymptotic upper bound
\begin{align*}
\frac1x \sum_{\substack{q > Y \\ \alpha \geq 2}}\sum_{\substack{n \leq x \\ p \in P_{q^{\alpha}} \\ p | \phi(n)}} \log q \ll  y^2 /Y.
\end{align*}
Now we would like to establish
\begin{align*}
\frac 1x \sum_{\substack{q > Y \\ \alpha \geq 2}}  \sum_{\substack{n \leq x \\ n\not\in S\\q^{\alpha+1} | \phi(n)}} \log q\ll y^2/Y.
\end{align*}
We note that the contribution of prime powers of $q$ dividing $\phi(n)$ for $n\not\in S$ can only come from distinct primes in $P_q$ dividing n.  We then have
\begin{align}\label{LPeq15}
\sum_{\substack{n \leq x \\ n\not\in S\\q^{\alpha+1} | \phi(n)}}1 \ll \frac1{(\alpha+1)!} \sum_{p_1,...,p_{\alpha+1}\in P_q}\sum_{p_1\cdots p_{\alpha+1}|n \leq x} 1\ll \frac{x(cy)^{\alpha+1}}{(\alpha+1)!q^{\alpha+1}},
\end{align}
where we intentionally omit the condition that the primes $p_i \in P_q$ are distinct and where $c$ is the constant appearing in the Brun-Titchmarsh theorem.  
As $Y \geq 2cy$ we have $cy/q \leq 1/2$.  Thus summing the LHS of \eqref{LPeq15} over $\alpha \geq 2$ and $q > Y$ and weighting by $\log q$ gives
\begin{align}\label{LPeq16}
\sum_{q > Y} \sum_{\alpha \geq 3} \frac{xc^{\alpha}y^{\alpha}}{\alpha!q^{\alpha}} \log q 
	& \leq x c^2y^2 \sum_{\alpha \geq 1} \frac1{\alpha! 2^\alpha}\sum_{q > Y} \frac{\log q}{q^2}\ll xy^2/Y
\end{align}
 as required.
\end{proof}

\section{Small primes $q\leq Y$}\label{LPsmallprimes}
In this section we will be concerned with estimates for small primes; namely, we will prove Propositions \ref{LPprop4}, \ref{LPprop6}, \ref{LPprop7} and \ref{LPprop8}.  The main term in our asymptotic formula will come from Proposition \ref{LPprop6} which concerns the sum
\begin{align}\label{LPeq5}
\sum_{q \leq Y} \nu_q(\alf) \log q.
\end{align}
The remaining two Propositions provide us with error terms.\footnote{Personal note: Give the reader a roadmap for what you'll do in this section.  The relevant numbers are Propositions \ref{LPprop4}, \ref{LPprop6} (on $g(n)$), \ref{LPprop7} (on $h(n)$) and \ref{LPprop8}\footnote{Personal note: If these proposition numbers are finalized I will change this to 9-12} which combines them.  \ref{LPprop8} requires going through the original prop7a.}

We restate a Lemma 11 from \cite{MP} which we will use:
\begin{lem}\label{LPlem2}
For a power of a prime $q^a$, the number of positive integers $n\leq x$ with $q^a$ dividing $\all$ is $O(xy^2/q^a).$

\end{lem}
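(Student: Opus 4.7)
My plan is to reduce the condition $q^a \mid \all$ to a disjunction of explicit divisibility conditions on $n$ by iterating the structure of $\lambda$, and then bound each piece by either a trivial count or the Brun-Titchmarsh inequality. Throughout I would take $q$ odd; the case $q = 2$ requires only the minor modification dictated by \eqref{LPlambda1}.

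First I would establish the auxiliary fact that, for an odd prime $q$ and any positive integer $m$, $q^a \mid \lambda(m)$ holds if and only if either $q^{a+1} \mid m$ (so that $\lambda(q^{a+1}) = q^a(q-1)$ supplies the required $q^a$) or some prime $p \in \cP_{q^a}$ divides $m$ (so that the factor $p - 1$ supplies it). This follows directly from \eqref{LPlambda1} and \eqref{LPlambda2} and is the natural generalization of the tree diagrams already drawn in the paper for the case $a = 1$.

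Applying this criterion first with $m = \lambda(n)$ and then again to $\lambda(n)$ inside each resulting sub-condition produces four exhaustive cases for $n$: (i) $q^{a+2} \mid n$; (ii) some $p' \in \cP_{q^{a+1}}$ divides $n$; (iii) some $p \in \cP_{q^a}$ satisfies $p^2 \mid n$; or (iv) there exist primes $p \in \cP_{q^a}$ and $r \in \cP_p$ with $r \mid n$. The counts in the first three cases are straightforward: case (i) contributes at most $x/q^{a+2}$; case (ii) contributes at most $\sum_{p' \in \cP_{q^{a+1}},\, p' \leq x} x/p' \ll xy/q^{a+1}$ by Lemma \ref{LPlemma:bt}; and case (iii) contributes at most $x \sum_{p > q^a} 1/p^2 \ll x/(q^a \log q^a)$ by Lemma \ref{LPlemma:primesums}. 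All three are $O(xy^2/q^a)$.

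The dominant and only delicate contribution is case (iv). Counting $n$ by its prime divisor $r$ and applying Brun-Titchmarsh twice gives
\begin{align*}
\#\{n \leq x : \text{(iv) holds}\} \leq \sum_{\substack{p \in \cP_{q^a} \\ p \leq x}} \sum_{\substack{r \in \cP_p \\ r \leq x}} \frac{x}{r} \ll \sum_{\substack{p \in \cP_{q^a} \\ p \leq x}} \frac{xy}{p-1} \ll \frac{xy^2}{\phi(q^a)} \ll \frac{xy^2}{q^a}.
\end{align*}
The point requiring care is this double application of Brun-Titchmarsh: the inner bound on the sum over $r \in \cP_p$ must be uniform in $p$, which is exactly what \eqref{LPeq41} provides. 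Summing the four contributions yields the claimed $O(xy^2/q^a)$ bound.
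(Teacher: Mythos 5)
Your proof is correct. Note that the paper itself does not prove this lemma at all --- it is imported verbatim as Lemma 11 of Martin and Pomerance \cite{MP} --- so you are supplying an argument where the paper offers only a citation. What you give is the natural (and essentially the standard) proof: your auxiliary criterion that, for odd $q$, $q^a \mid \lambda(m)$ iff $q^{a+1}\mid m$ or some $p\in\cP_{q^a}$ divides $m$ is exactly the principle behind the tree diagrams the paper draws for the case $a=1$, and your four cases (i)--(iv) are the depth-two expansion of that tree; the dominant case (iv) is then handled by the same double application of the Brun--Titchmarsh bound \eqref{LPeq41} that the paper uses in the large-$q^\alpha$ part of the proof of Proposition \ref{LPprop7}. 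The only points deserving the care you already flagged are the uniformity in $p$ of the inner Brun--Titchmarsh estimate over $r\in\cP_p$ (which \eqref{LPeq41} provides) and the prime $q=2$, where the shift in \eqref{LPlambda1} changes exponents by $2$ rather than $1$ and the bound is in any case trivial unless $2^a > y^2$. No gaps.
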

\footnote{Personal Note: Make sure you use $\alpha$ and not $a$ by accident.}
\begin{proof}[Proof of Proposition \ref{LPprop4}]
We break the summation up into two parts depending on the size of $q^\alpha,$
\begin{align*}
\sum_{q \leq Y} \nu_q(\all) \log q
	&= \sum_{q \leq Y} \log q \sumst{\alpha \geq 1 \\ q^\alpha | \all}1\\
	& \ll \sum_{q \leq Y} \log q \sumst{\alpha \geq 1 \\ q^\alpha \leq Z}1 + \sum_{q\leq Y} \log q\sumst{\alpha \geq 1 \\ q^\alpha >Z \\ q^\alpha | \all}1.
\end{align*}
We may bound the first sum as
\begin{align*}
\sum_{q \leq Y} \log q \sumst{\alpha \geq 1 \\ q^\alpha \leq Z}1 \ll Y \log Z/\log Y.
\end{align*}
We use an average estimate to bound the second sum.  Note
\begin{align}
\frac1x \sum_{n\leq x} \sum_{q\leq Y}\log q \sumst{\alpha\geq1 \\ q^\alpha > Z\\q^\alpha |\all} 1
	= \frac1x \sum_{q \leq Y} \log q \sumst{\alpha\geq1 \\ q^\alpha > Z} \sumst{n\leq x \\ q^\alpha | \all} 1.\label{LPeq55}
\end{align}
From Lemma \ref{LPlem2}, we see \eqref{LPeq55} is 
\begin{align*}
	\ll \frac1x \sum_{q\leq Y} \log q \sumst{\alpha\geq1 \\ q^\alpha > Z} \frac{x y^2}{q^\alpha}
	\ll\sum_{q\leq Y}\frac{y^2 \log q}{Z}
	\ll \frac{y^2 Y}{Z}.
\end{align*}
Therefore
\begin{align*}
\sum_{q\leq Y}\log q \sumst{\alpha\geq1 \\ q^\alpha > Z\\q^\alpha |\all} 1 \ll y^2 Y\psi(x) /Z,
\end{align*}
for almost all $n\leq x.$  Combining our upper bounds gives 
\begin{align*}
\sum_{q \leq Y}\nu_q(\all) \log q \ll (Y \log Z/\log Y + y^2 Y/Z) \psi(x),
\end{align*}
for almost all $n\leq x.$  Substituting $Y=3cy$ and $Z=y^2$ gives the theorem.
\end{proof}

%
%

Recall $q^{\alpha}$ divides $\alf$ if one of
\footnote{Personal note: In the case $q^{\alpha}=2^{\alpha}$, when $\alpha\geq 3$, we have the stronger first case $q^{\alpha+2}|\phi(n)$.}
\begin{itemize}
\item $q^{\alpha+1}|\phi(n)$
\item $q^{\alpha} | p-1, p | r-1, r | n$
\item $q^{\alpha} | p-1, p^2 | n$
\end{itemize}
occurs.  Note that these conditions are not mutually exclusive.  
We write \eqref{LPeq5} as
\begin{align*}
\sum_{q \leq Y} \nu_q(\alf) \log q  = g(n) + O\bigg(h(n) + \sum_{q \leq Y} \sum_{\substack{p \in P_{q^{\alpha}} \\ p^2 | n}} \log q\bigg),
\end{align*} 
where
\begin{align*}
g(n) 	& = \sum_{q \leq Y}\sum_{\substack{\alpha \geq 1 \\ q^{\alpha+1} | \phi(n)}} \log q,\\
h(n) & = \sum_{q \leq Y} \sum_{\substack{\alpha \geq 1 \\ \omega(n, Q_{q^{\alpha}}) > 0}} \log q, \textrm{~and}\\
Q_{q^{\alpha}} &= \{ r\leq x : \exists p \in P_{q^{\alpha}}~\text{st}~ r\in P_p\}.
\end{align*}
Thus, for almost all $n\leq x$,
\begin{align}
\sum_{q \leq Y} \nu_q(\alf) \log q  = g(n) + O(h(n) + \psi(x)\log_2 Y).\label{LPeq60}
\end{align}

In the next two sections, we prove Propositions \ref{LPprop6} and \ref{LPprop7}.  We see that Proposition \ref{LPprop8} follows immediately by applying these two propositions to equation \eqref{LPeq60} giving
\begin{align*}
\sum_{q\leq Y} \nu_q(\alf) \log q = y\log y +O(y\psi(x))
\end{align*}
for almost all $n\leq x$, as required.

\subsection{Normal order of $g(n)$}
Our strategy is to approximate $g(n)$ from above and below by an additive arithmetic function, thus indirectly making $g(n)$ amenable to the Tur\'{a}n-Kubilius inequality.  To start, write $g(n)$ as 
\begin{align}
g(n) 
	&= \sum_{q \leq Y}\sum_{\substack{ \alpha \geq 1 \\ q^{\alpha +1} | \phi(n)}} \log q\nonumber\\
	&= \sum_{q \leq Y}(\nu_q(\phi(n))-1)  \log q\nonumber\\
	&= \sum_{q \leq Y}\sum_{p|n} \nu_q(p-1) \log q - Y(1+o(1)) + O\bigg(\sum_{q \leq Y}\nu_q(n) \log q\bigg),\label{LPeq59a}
\end{align}
where we used the double inequality
\begin{align*}
\sum_{p | n} \nu_q(p-1) \leq \nu_q(\phi(n)) \leq \sum_{p|n} \nu_q(p-1) + \nu_q(n).
\end{align*}

\footnote{Personal note:{\em See the comment in the section on the normal order of $h(n)$ that concerns second order terms.  If we wanted to find a more accurate asymptotic formula than one would have to look at $Y$ here.  In our current argument we are ignoring it (ie. absorbing it into the error term).}}

We will use the Tur\'{a}n-Kubilius inequality:
\begin{lem}[The Tur\'an-Kubilius Inequality]
\label{LPTKlem}
There exists an absolute constant $C$ such that for all additive functions $f(n)$ and all $x\geq 1$ the inequality
\begin{align}\label{LPTKineq}
\sum_{n\leq x} |f(n) - A(x)|^2 \leq C x B(x)^2
\end{align}
holds where
\begin{align*}
A(x) &= \sum_{p\leq x} f(p)/p\text{, and}\\
B(x)^2 &= \sum_{p^k \leq x} |f(p^k)|^2/p^k.
\end{align*}
\end{lem}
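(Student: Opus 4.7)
The plan is to follow the classical proof of Tur\'an--Kubilius: expand the square, use the additivity of $f$ to reduce everything to sums over prime powers $p^k \leq x$, and exploit the near-independence of the divisibility indicators $\mathbf{1}_{p^k\|n}$ for distinct primes.  Heuristically, for $n$ drawn uniformly from $\{1,\ldots,\lfloor x\rfloor\}$, $f(n)$ behaves like a sum of nearly independent random variables of total mean $\approx A(x)$ and total variance $\approx B(x)^2$; the stated inequality is Chebyshev's second-moment bound for this model, made rigorous.

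First I would expand
\[
\sum_{n\leq x}|f(n)-A(x)|^2 = S_1 - 2\,\mathrm{Re}\!\left(\overline{A(x)}\,S_2\right) + \lfloor x\rfloor|A(x)|^2,
\]
where $S_1 = \sum_{n\leq x}|f(n)|^2$ and $S_2 = \sum_{n\leq x}f(n)$.  For $n\leq x$, additivity gives $f(n) = \sum_{p^k\|n}f(p^k)$ with every $p^k \leq x$, and interchanging orders of summation yields $S_2 = \sum_{p^k\leq x}f(p^k)\,N(p^k)$, where $N(p^k) := \lfloor x/p^k\rfloor - \lfloor x/p^{k+1}\rfloor = x(1-1/p)/p^k + O(1)$.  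Expanding $|f(n)|^2 = \sum_{p^j\|n}\sum_{q^\ell\|n}f(p^j)\overline{f(q^\ell)}$ splits $S_1$ into a diagonal part (where $p=q$, forcing $j=\ell$) contributing $\sum_{p^j\leq x}|f(p^j)|^2 N(p^j) \ll xB(x)^2$, and an off-diagonal part summed over $p\neq q$ whose joint counts $N(p^j,q^\ell) = x(1-1/p)(1-1/q)/(p^j q^\ell) + O(1)$ arise by inclusion--exclusion on four floor functions.

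Next I would show that, after adding back the missing $p=q$ terms at negligible cost, the leading off-diagonal contribution equals $x|A^\ast(x)|^2 + O(xB(x)^2)$, where $A^\ast(x) := \sum_{p^k\leq x}f(p^k)(1-1/p)/p^k$.  Combining with $S_2 = xA^\ast(x) + E$ and the $\lfloor x\rfloor|A(x)|^2$ term, the three main contributions collapse via the algebraic identity
\[
|A^\ast(x)|^2 - 2\,\mathrm{Re}\!\left(\overline{A(x)}A^\ast(x)\right) + |A(x)|^2 = |A(x) - A^\ast(x)|^2,
\]
leaving $x|A(x)-A^\ast(x)|^2$ plus controlled errors.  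Since $A(x) - A^\ast(x) = \sum_{p\leq x}f(p)/p^2 - \sum_{p^k\leq x,\,k\geq 2}f(p^k)(1-1/p)/p^k$, two Cauchy--Schwarz applications (against $\sum 1/p^3$ and $\sum_{k\geq 2}1/p^k$) yield $|A(x)-A^\ast(x)|^2 \ll B(x)^2$, and the desired conclusion follows.

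The main obstacle is uniform control of all error terms by $CxB(x)^2$: those from the floor approximations in $N(p^k)$, from the $O(1)$ remainders in $N(p^j,q^\ell)$, and from restoring the missing diagonal in the off-diagonal sum.  The recurring tool is Cauchy--Schwarz weighted by $1/p^k$: for nonnegative weights $w_{p^k}$,
\[
\Bigl(\sum_{p^k\leq x}|f(p^k)|\,w_{p^k}\Bigr)^2 \leq B(x)^2 \sum_{p^k\leq x}p^k w_{p^k}^2,
\]
which converts linear sums of $|f(p^k)|$ into $B(x)^2$ times a purely arithmetic factor.  Terms must be paired carefully so that these arithmetic factors remain bounded---rather than growing like $\log\log x$---thereby preserving the uniformity of the absolute constant $C$ in the final bound.
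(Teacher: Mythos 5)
The paper never proves this lemma: it is the classical Tur\'an--Kubilius inequality, quoted as a known result and applied to the additive function $g_0(n)=\sum_{q\le Y}\sum_{p\mid n}\nu_q(p-1)\log q$, so there is no in-paper argument to compare yours against. What you propose is the standard second-moment proof (Tur\'an's argument as presented, e.g., in Tenenbaum or Elliott), and it is correct in outline: the expansion into $S_1$, $S_2$ and $\lfloor x\rfloor|A(x)|^2$, the exact counts $N(p^k)=\lfloor x/p^k\rfloor-\lfloor x/p^{k+1}\rfloor$ and the inclusion--exclusion formula for $N(p^j,q^\ell)$, the collapse of the main terms into $x|A(x)-A^*(x)|^2$, and the reconciliation of the paper's centering $A(x)=\sum_{p\le x}f(p)/p$ with $A^*(x)=\sum_{p^k\le x}f(p^k)(1-1/p)p^{-k}$ via $|A(x)-A^*(x)|\ll B(x)$ are all sound. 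The one place where the proposal is thin rather than wrong is the error control you defer to your final paragraph: the $O(1)$ remainders in $N(p^j,q^\ell)$ may only be summed over pairs with $p^jq^\ell\le x$ (the joint count vanishes otherwise), and one then needs two successive applications of your weighted Cauchy--Schwarz inequality together with Chebyshev's bound $\sum_{p^k\le t}p^k\ll t^2/\log 2t$ to get this error $\ll xB(x)^2$; the same $1/\sqrt{\log x}$ saving in $S_2=xA^*(x)+O\bigl(B(x)x/\sqrt{\log x}\bigr)$ is what absorbs the potential loss $|A(x)|\ll B(x)\sqrt{\log_2 x}$ in the cross term. With those estimates written out explicitly, your argument is a complete proof of the lemma, uniform in $f$ and $x\ge1$ as the statement requires, and it is exactly the route the literature takes; the paper simply buys the result off the shelf.
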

\begin{proof}[Proof of Proposition \ref{LPprop6}]
We will use Lemma \ref{LPTKlem} for the additive function  $g_0(n) = \sum_{q\leq Y}\sum_{p|n}\nu_q(p-1)\log q$.  Let $A(x)$ and $B(x)$ be the first and second moments: 
\begin{align*}
A(x) &= \sum_{r \leq x} g_0(r)/r, \text{ and}\\
B(x) &=  \sum_{r^k\leq x} g_0(r^k)^2/ r^{k}.
\end{align*}
Notice that $g_0(r^k) = g_0(r) =  \sum_{q\leq Y}\nu_q(r-1)\log q$ leading to 
\begin{align*}
A(x) = \sum_{r \leq x} \frac1r \sum_{q \leq Y} \sum_{p|r} \nu_q(p-1)\log q
	& = \sum_{q\leq Y} \log q \sum_{r \leq x} \frac{\nu_q(r-1)}{r}\\
	& = \sum_{q\leq Y} \log q \sum_{\alpha \geq 1}\sum_{\substack{r \leq x \\ r \in P_{q^{\alpha}}}} \frac 1r.
\end{align*}
We split the sum over $\alpha$ into
\begin{align*}
 \sum_{1\leq \alpha \leq w_q}\sum_{\substack{r \leq x \\ r \in P_{q^{\alpha}}}} \frac 1r
 + \sum_{ \alpha >  w_q }\sum_{\substack{r \leq x \\ r \in P_{q^{\alpha}}}} \frac 1r,
 \end{align*}
 with $w_q$ to be determined later.
The first we estimate with Page's theorem and the second we bound with the Brun-Titchmarsh bound
$$\sum_{\substack{r \leq x \\r \equiv 1 \mod d}} 1/r \ll y/\phi(d).$$ 

 \begin{align}\label{LPeq6}
 \sum_{\alpha=1}^\infty \frac{y}{\phi(q^{\alpha})} 
 +O\bigg(\frac{y}{q^{w_q}}+ w_q\bigg) = \frac{yq}{(q-1)^2} + O\bigg(\frac{y}{q^{w_q}} + w_q\bigg)
 \end{align}
 Note used the\footnote{Personal note: best possible} bound $1/q^{\lfloor w_q \rfloor+1} = O(1/q^{w_q})$.
Taking $w_q = \log y/\log q$ gives an error term of $O(w_q) = O(\log y/\log q)$. 
Summing \eqref{LPeq6} over $q\leq Y$ weighted by $\log q$ gives the asymptotic formula
\begin{align}\label{LPeqm1}
A(x) 
	&= y \sum_{q\leq Y} \frac{q \log q}{(q-1)^2} + O\bigg(\frac{Y\log y}{\log Y} + Y\bigg)\nn
	&= y \log Y + O\bigg(\frac{Y\log y}{\log Y} + Y\bigg).
\end{align}

Expanding the square, write the second moment $B(x)$ as
\begin{align*}
B(x) 
	&= \sum_{q_1,q_2 \leq Y} \log q_1 \log q_2 \sum_{r\leq x}\nu_{q_1}(r-1) \nu_{q_2}(r-1)\sum_{\substack{k\leq 1\\ r^k \leq x}} 1/r^k.
\end{align*}
Uniformly in primes $r$, $\sum_{k \geq 1} 1/r^k \ll 1/r$.  We may also express $\nu_{q_i}(r-1)$ ($i=1,2$) as 
\begin{align*}
\nu_{q_i}(r-1) = \sum_{\substack{\alpha_i \geq 1 \\ r \in P_{q_i^{\alpha_i}}}} 1,
\end{align*}
giving the expanded
\begin{align*}
B(x) 
	&\ll \sum_{q_1,q_2 \leq Y} \log q_1 \log q_2 \sum_{\alpha_1,\alpha_2 \geq 1}\sum_{\substack{r\leq x\\ r\in P_{q_1^{\alpha_1}} \cap P_{q_2^{\alpha_2}}}}\frac1r.
\end{align*}
We split the sum in $q_1,q_2$ into the two cases: $q_1=q_2$ and $q_1\neq q_2$.   For the $q_1,q_2$ with $q = q_1 = q_2$ we have
\begin{align}
\sum_{q \leq Y} (\log q)^2 \sum_{\alpha_1,\alpha_2\geq 1}\sum_{\substack{r \leq x\\ r \in P_{q^{\max(\alpha_1,\alpha_2)}}}} \frac1r 
	& = \sum_{q \leq Y} (\log q)^2 \sum_{\alpha \geq 1}\sum_{\substack{r \leq x\\ r \in P_{q^{\alpha}}}} \frac\alpha r \nonumber \\
	& \ll \sum_{q \leq Y} (\log q)^2 \sum_{\alpha \geq 1}\frac{\alpha y}{q^{\alpha}} \nonumber \\
	& \ll y \sum_{q \leq Y} \frac{(\log q)^2}{q} \nonumber \\
	& \ll y (\log Y)^2. \label{LPeq201}
\end{align}

If $q_1$ and $q_2$ are distinct then we have an upper bound (intentionally ignoring the condition that $q_1\neq q_2$ in the sum)
\begin{align}
\sum_{q_1,q_2 \leq Y} \log q_1 \log q_2 \sum_{\alpha_1,\alpha_2 \geq 1}\sum_{\substack{r\leq x\\ r\in P_{q_1^{\alpha_1}q_2^{\alpha_2}}}}\frac1r
	& \ll \sum_{q_1,q_2 \leq Y} \log q_1 \log q_2 \sum_{\alpha_1,\alpha_2 \geq 1}\frac{y}{q_1^{\alpha_1}q_2^{\alpha_2}}\nonumber\\
	&\ll y \sum_{q_1,q_2 \leq Y} \frac{\log q_1 \log q_2}{q_1q_2}\nonumber\\
	&\ll y(\log Y)^2.\label{LPeq202}
\end{align}
Combining \eqref{LPeq201} and \eqref{LPeq202} gives \
\begin{align}
B(x) \ll y (\log Y)^2.
\end{align}
Using Lemma \ref{LPTKlem} we may conclude
that
The statement of Lemma \ref{LPTKlem} gives us the equation
\begin{align}
\sum_{n\leq x} |g_0(n) - A(x)|^2 \leq C x B(x)^2.
\end{align}
Thus the set of $n\leq x$ on which $g_0(n)$ differs from $A(x)$ by more than $y$ is $O(x(\log Y)^2/y) = O(x/\psi(x))$.  

The mean value of $\sum_{q\leq Y} \nu_q(n) \log q$ for $n\leq x$ is
$	 \ll 1/x \sum_{q\leq Y} x \log q /q 
	 \ll \sum_{q \leq Y} \log q/q \sim \log Y, $ so $\sum_{q\leq Y} \nu_q(n) \log q \ll \log^2 Y$ for almost all $n\leq x$. Thus from \eqref{LPeq59a}, we see that for almost all $n\leq x$, 
\begin{align}
g(n) = y \log Y + O\bigg(\frac{Y\log y}{\log Y} + Y\bigg),
\end{align}
Substituting $Y=3cy$ gives the theorem.
\end{proof}
\subsection{Normal order of $h(n)$}  
\begin{proof}[Proof of Proposition \ref{LPprop7}]
In order to find an upper bound on a set of asymptotic density 1, we will compute the first moment of $h(n)$:
\begin{align*}
H(x)
	&:= \frac1x\sum_{n\leq x} h(n) = \frac1x\sum_{\substack{q \leq Y \\ \alpha \geq 1}} \sum_{\substack{n\leq x \\ \omega(n, Q_{q^{\alpha}}) > 0 }}\log q\\
	&= \frac1x \sum_{\substack{q^{\alpha} \leq Z\\ q \leq Y\\ \alpha \geq 1}} \sum_{\substack{n\leq x \\ \omega(n, Q_{q^{\alpha}}) > 0 }}\log q + \frac1x \sum_{\substack{q^{\alpha} > Z \\ q \leq Y}} \sum_{\substack{n\leq x \\ \omega(n, Q_{q^{\alpha}}) > 0 \\ \alpha \geq 1 }}\log q.
\end{align*}

We deal with the two sums in turn.
\paragraph{Small $q^{\alpha}$}
The first part is for small powers of $q$:
\begin{align}
		\frac1x \sum_{\substack{q^{\alpha} \leq Z\\ q \leq Y}} \sum_{\substack{n\leq x \\ \omega(n, Q_{q^{\alpha}}) > 0 }}\log q & \leq \frac1x \sum_{\substack{q^{\alpha} \leq Z\\ q \leq Y}}\log q \sum_{n\leq x} 1 \leq\sum_{\substack{q^{\alpha} \leq Z\\ q \leq Y}}\log q  = \frac{Y\log Z}{\log Y}.\label{LPeq63}
 \end{align}
\paragraph{Large $q^{\alpha}$} The second part is for large powers of $q$.  In this case we use a crude estimate that is sufficient for our needs:
 \begin{align}
 \frac1x \sum_{\substack{q^{\alpha} > Z \\ q \leq Y}} \sum_{\substack{n\leq x \\ \omega(n, Q_{q^{\alpha}}) > 0 }}\log q 
 	& \ll \frac 1x \sum_{\substack{q^{\alpha} > Z \\ q \leq Y}}\log q \sum_{r \in Q_{q^{\alpha}}} \sum_{\substack{ n\leq x \\ r | n}}1\nonumber\\
	& \ll \frac1x \sum_{\substack{q^{\alpha} > Z \\ q \leq Y}}\log q \sum_{r \in Q_{q^{\alpha}}} \frac{x}{r}\nonumber\\
	& \ll  \sum_{\substack{q^{\alpha} > Z \\ q \leq Y}}\log q \sum_{p \in P_{q^{\alpha}}} \sum_{r \in P_{p}}\frac{1}{r}\nonumber \\ 
	& \ll y^2 \sum_{\substack{q^{\alpha} > Z \\ q \leq Y}} \frac{\log q}{q^{\alpha}}.\label{LPeq62}
\end{align}
The RHS of \eqref{LPeq62} is less than $\sum_{q \leq Y} \sum_{\alpha > \log Z/\log q} \log q/ q^{\alpha} \leq 2 \sum_{q \leq Y}\log q/q^{\log Z/\log q} \ll Y/Z$, or alternatively $q^{\alpha} \geq Z$ and $\sum_{q\leq Y} \log q \sim Y$.

Thus
\begin{align}
\frac1x \sum_{\substack{q^{\alpha} > Z \\ q \leq Y}} \sum_{\substack{n\leq x \\ \omega(n, Q_{q^{\alpha}}) > 0 }}\log q 
=O (y^2 Y/Z).\label{LPeq64}
\end{align}
Summing \eqref{LPeq63} and \eqref{LPeq64} gives 
\begin{align*}
		H(x) \ll Y\log Z/\log Y + y^2Y/Z \ll y,
\end{align*}
where we substituted the values of $Y$ and $Z$.  Thus, for almost all $n\leq x$, 
\begin{align*}
h(n) \ll y\psi(x).
\end{align*}
\end{proof}

\paragraph{Acknowledgements} I would like to thank my supervisor, Greg Martin, for his support and encouragement during the preparation of this article.

\bibliographystyle{plain}
\bibliography{LambdaPhi}
\end{document}